\documentclass[12pt,reqno]{amsart}

\usepackage{color} 
\usepackage{amstext} \usepackage{amsthm} \usepackage{amsmath} \usepackage{amssymb}
\usepackage{latexsym} \usepackage{amsfonts} \usepackage{graphicx} \usepackage{texdraw} \usepackage{graphpap}
\usepackage{enumerate}

\usepackage[pagebackref,hypertexnames=false, colorlinks, citecolor=red, linkcolor=red]{hyperref}
\usepackage[backrefs]{amsrefs}

\usepackage[inline,nomargin]{fixme}
\DeclareMathOperator{\WOT}{WOT}

\input txdtools

\bibliographystyle{plain}

\setlength{\evensidemargin}{0in} \setlength{\oddsidemargin}{0in} \setlength{\topmargin}{-.5in}
\setlength{\textheight}{9in} \setlength{\textwidth}{6.5in}

\begin{document} 
\newcommand{\ci}[1]{_{ {}_{\scriptstyle #1}}}

\newcommand{\norm}[1]{\ensuremath{\left\|#1\right\|}} \newcommand{\abs}[1]{\ensuremath{\left\vert#1\right\vert}}
\newcommand{\ip}[2]{\ensuremath{\left\langle#1,#2\right\rangle}} \newcommand{\p}{\ensuremath{\partial}}
\newcommand{\pr}{\mathcal{P}}

\newcommand{\pbar}{\ensuremath{\bar{\partial}}} \newcommand{\db}{\overline\partial} \newcommand{\D}{\mathbb{D}}
\newcommand{\B}{\mathbb{B}} \newcommand{\Sn}{{\mathbb{S}_n}} \newcommand{\T}{\mathbb{T}} \newcommand{\R}{\mathbb{R}}
\newcommand{\Z}{\mathbb{Z}} \newcommand{\C}{\mathbb{C}} \newcommand{\N}{\mathbb{N}} \newcommand{\scrH}{\mathcal{H}}
\newcommand{\scrL}{\mathcal{L}} \newcommand{\td}{\widetilde\Delta}

\newcommand{\Aa}{\mathcal{A}} \newcommand{\BB}{\mathcal{B}} \newcommand{\HH}{\mathcal{H}} \newcommand{\KK}{\mathcal{K}}
\newcommand{\LL}{\mathcal{L}} \newcommand{\MM}{\mathcal{M}} \newcommand{\FF}{\mathcal{F}}

\newcommand{\AapBerg}{\Aa_p(\B_n)} \newcommand{\AaFock}{\Aa_\phi (\C^n)} \newcommand{\AatwoBerg}{\Aa_2(\B_n)}
\newcommand{\Om}{\Omega} \newcommand{\La}{\Lambda} \newcommand{\AaordFock}{\Aa (\C^n)}

\newcommand{\rk}{\operatorname{rk}} \newcommand{\card}{\operatorname{card}} \newcommand{\ran}{\operatorname{Ran}}
\newcommand{\osc}{\operatorname{OSC}} \newcommand{\im}{\operatorname{Im}} \newcommand{\re}{\operatorname{Re}}
\newcommand{\tr}{\operatorname{tr}} \newcommand{\vf}{\varphi} \newcommand{\f}[2]{\ensuremath{\frac{#1}{#2}}}

\newcommand{\kzp}{k_z^{(p,\alpha)}} \newcommand{\klp}{k_{\lambda_i}^{(p,\alpha)}} \newcommand{\TTp}{\mathcal{T}_p}

\newcommand{\vp}{\varphi} \newcommand{\al}{\alpha} \newcommand{\be}{\beta} \newcommand{\la}{\lambda}
\newcommand{\li}{\lambda_i} \newcommand{\lb}{\lambda_{\beta}} \newcommand{\Bo}{\mathcal{B}(\Omega)}
\newcommand{\Bbp}{\mathcal{B}_{\beta}^{p}} \newcommand{\Bbt}{\mathcal{B}(\Omega)} \newcommand{\Lbt}{L_{\beta}^{2}}
\newcommand{\Kz}{K_z} \newcommand{\kz}{k_z} \newcommand{\Kl}{K_{\lambda_i}} \newcommand{\kl}{k_{\lambda_i}}
\newcommand{\Kw}{K_w} \newcommand{\kw}{k_w} \newcommand{\Kbz}{K_z} \newcommand{\Kbl}{K_{\lambda_i}}
\newcommand{\kbz}{k_z} \newcommand{\kbl}{k_{\lambda_i}} \newcommand{\Kbw}{K_w} \newcommand{\kbw}{k_w}
\newcommand{\BL}{\mathcal{L}\left(\mathcal{B}(\Omega), L^2(\Om;d\sigma)\right)}
\newcommand{\Fpphi}{\ensuremath{{\mathcal{F}}_\phi ^p }}
\newcommand{\Ftwophi}{\ensuremath{{\mathcal{F}}_\phi ^2 }}
\newcommand{\incn}{\ensuremath{\int_{\C}}}
\newcommand{\Finfphi}{\ensuremath{\mathcal{F}_\phi ^\infty }}
\newcommand{\Fp}{\ensuremath{\mathcal{F} ^p }} \newcommand{\Fq}{\ensuremath{\mathcal{F} ^q }}
\newcommand{\Ft}{\ensuremath{\mathcal{F} ^2 }} \newcommand{\Lt}{\ensuremath{L ^2 }}
\newcommand{\Lp}{\ensuremath{L ^p }}
\newcommand{\Fonephi}{\ensuremath{\mathcal{F}_\phi ^1 }}
\newcommand{\Lpphi}{\ensuremath{L_\phi ^p}}
\newcommand{\Ltwophi}{\ensuremath{L_\phi ^2}}
\newcommand{\Lonephi}{\ensuremath{L_\phi ^1}}
\newcommand{\af}{\mathfrak{a}} \newcommand{\bb}{\mathfrak{b}} \newcommand{\cc}{\mathfrak{c}}
\newcommand{\Fqphi}{\ensuremath{\mathcal{F}_\phi ^q }}

\newcommand{\entrylabel}[1]{\mbox{#1}\hfill}

\newenvironment{entry} {\begin{list}{X}%
  {\renewcommand{\makelabel}{\entrylabel}%
      \setlength{\labelwidth}{55pt}%
      \setlength{\leftmargin}{\labelwidth}
      \addtolength{\leftmargin}{\labelsep}%
   }%
}


\numberwithin{equation}{section}

\newtheorem{thm}{Theorem}[section] \newtheorem{lm}[thm]{Lemma} \newtheorem{cor}[thm]{Corollary}
\newtheorem{conj}[thm]{Conjecture} \newtheorem{prob}[thm]{Problem} \newtheorem{prop}[thm]{Proposition}
\newtheorem*{prop*}{Proposition}

\theoremstyle{remark} \newtheorem{rem}[thm]{Remark} \newtheorem*{rem*}{Remark} \newtheorem{example}[thm]{Example}

\theoremstyle{definition} \newtheorem{definition}[thm]{Definition}

\title{Localization and Compactness in Bergman and Fock Spaces}

\author[J. Isralowitz]{Joshua Isralowitz} \address{Joshua Isralowitz, Department of Mathematics and Statistics \\ University at Albany \\ 1400 Washington Ave. \\ Albany, NY USA 12222} \email{Jisralowitz@albany.edu} \urladdr{http://www.albany.edu/~ji126652/}

\author[M. Mitkovski]{Mishko Mitkovski$^\dagger$} \address{Mishko Mitkovski, Department of Mathematical Sciences\\
Clemson University\\ O-110 Martin Hall, Box 340975\\ Clemson, SC USA 29634} \email{mmitkov@clemson.edu}
\urladdr{http://people.clemson.edu/~mmitkov/} \thanks{$\dagger$ Research supported in part by National Science Foundation
DMS grant \# 1101251.}

\author[B. D. Wick]{Brett D. Wick$^\ddagger$} \address{Brett D. Wick, School of Mathematics\\ Georgia Institute of
Technology\\ 686 Cherry Street\\ Atlanta, GA USA 30332-0160} \email{wick@math.gatech.edu}
\urladdr{www.math.gatech.edu/~wick} \thanks{$\ddagger$ Research supported in part by National Science Foundation DMS
grants \# 1001098 and \# 0955432.}

\subjclass[2000]{32A36, 32A, 47B05, 47B35} \keywords{Berezin Transform, Compact Operators, Bergman Space, Fock Space,
Toeplitz Operator, Sufficiently Localized Operator}

\begin{abstract} In this paper we study the compactness of operators on the Bergman space of the unit ball and
on very generally weighted Bargmann-Fock spaces in terms of the behavior of their Berezin transforms and the norms of the operators acting on reproducing kernels. In particular, in the Bergman space setting we show how a vanishing
Berezin transform combined with certain (integral) growth conditions on an operator $T$ are sufficient to imply that the
operator is compact.  In the weighted Bargmann-Fock space setting we show that the reproducing kernel thesis for compactness holds for operators satisfying similar growth conditions.  The main results extend the results of Xia and Zheng to the case of the
Bergman space when $1 < p < \infty$, and in the weighted Bargmann-Fock space setting, our results provide new, more general conditions that imply the work of Xia and Zheng via a more familiar approach that can also handle the $1 < p < \infty$ case. \end{abstract}

\maketitle

\section{Introduction} \label{Intro}

The Bargmann-Fock space $\mathcal{F}^p:=\mathcal{F}^p(\C^n)$ is the collection of entire functions $f$ on $\C^n$ such that $f(\cdot) e^{- \frac{\abs{\cdot}}{2}} \in L^p(\C^n, dv)$.  It is well known that $\FF^2$ is a reproducing kernel Hilbert
space with reproducing kernel given by $K_z(w)=e^{\overline{z}w}$.  As usual, we denote by $k_z$ the normalized
reproducing kernel at $z$. For a bounded operator $T$ on $\FF^p$, the Berezin transform of $T$ is the function defined by
$$\tilde{T}(z)=\ip{Tk_z}{k_z}_{\mathcal{F}^2}.$$ It was proved recently by Bauer and the first author that the vanishing
of the Berezin transform is sufficient for compactness whenever the operator is in the Toeplitz algebra \cite{BI}. However, it
is generally very difficult to check whether a given operator $T$ is in the Toeplitz algebra, unless $T$ is itself a
Toeplitz operator or a combination of a few Toeplitz operators, and as such one would like a ``simpler'' sufficient
condition to guarantee this.

In the recent and interesting paper \cite{XZ}, Xia and Zheng introduced a class of ``sufficiently localized'' operators on
$\FF^2$ which includes the algebraic closure of the Toeplitz operators. These are the operators $T$ acting on $\FF^2$ such that there exist constants $2n<\beta<\infty$ and $0<C<\infty$ with \begin{equation} \label{SL-Fock}
\abs{\ip{Tk_z}{k_w}_{\mathcal{F}^2}}\leq\frac{C}{\left(1+\abs{z-w}\right)^{\beta}}. \end{equation} It was proved by Xia
and Zheng that every bounded operator $T$ from the $C^*$ algebra generated by sufficiently localized operators whose
Berezin transform vanishes at infinity, i.e., \begin{equation}\label{Ber} \lim_{\abs{z}\to
\infty}\ip{Tk_z}{k_z}_{\mathcal{F}^2}=0 \end{equation} is compact on $\mathcal{F}^2$. One of their main innovations is
providing an easily checkable condition~\eqref{SL-Fock} which is general enough to imply compactness from the seemingly
much weaker condition~\eqref{Ber}.

The aim of this paper is threefold.  First, we wish to extend the Xia-Zheng notion of sufficiently localized operators to both a much wider class of weighted Fock spaces (in particular, the class of so-called ``generalized Bargmann-Fock spaces" considered in \cite{SV}) and to a larger class of operators.  Note that \eqref{SL-Fock} easily implies  $$
\sup_{z\in\C^n}\int_{\C^n}\abs{\ip{Tk_z}{k_w}_{\mathcal{F}^2}} \,dv(w)<\infty; $$ and consequently one should look at
generalizations of sufficiently localized operators that allow for weaker integral conditions. Also, note that the ideas in \cite{XZ} are essentially frame theoretic (see \cite{I} for a discussion of the ideas in \cite{XZ} from this point of view) and therefore one can not easily extend these ideas to the non-Hilbert space setting.  To remedy this, we will provide a simpler,
more direct proof of the main result in \cite{XZ} which follows a more traditional route and which can be extended to other
(not necessarily Hilbert) spaces of analytic functions.  In particular, we show that our main result, in an appropriately modified form, holds for the classical Bergman space $A^p$ on the ball (and in Section \ref{ConcRemSec} we will discuss the possibility of extending our results to a very wide class of weighted Bergman spaces.)

The extension of the main results in \cite{XZ} to a larger class of operators and to a wider class of weighted Fock spaces is as follows. Let $d^c =  \frac{i}{4} (\overline{\partial} - \partial)$ and let $d$ be the usual exterior derivative. For the rest of the paper let $\phi \in C^2(\C^n)$ be a real valued function on $\C^n$  such that \begin{equation*} c \omega_0 < d d^c \phi < C \omega_0 \end{equation*} holds uniformly pointwise on $\C^n$ for some positive constants $c$ and $C$ (in the sense of positive $(1, 1)$ forms) where $\omega_0 = d d^c |\cdot |^2$ is the standard Euclidean K\"{a}hler form.   Furthermore, for $0 < p \leq \infty$, define the generalized Bargmann-Fock space $\Fpphi$ to be the space of entire functions $f$  on $\C^n$ such that $fe^{-\phi} \in L^p(\C^n, dv)$ (for a detailed study of the linear space properties of $\Fpphi$ see \cite{SV}).   For operators $T$ acting on the reproducing kernels $K(z, w)$ of $\Ftwophi$, we
impose the following conditions.  We first assume that \begin{equation}\label{assump1-Fock}
\sup_{z\in\mathbb{C}^n}\int_{\mathbb{C}^n}\abs{\ip{Tk_z}{k_w}_{\Ftwophi}}\,dv(w)<\infty, \hspace{.5cm}
\sup_{z\in\C^n}\int_{\C^n}\abs{\ip{T^*k_z}{k_w}_{\Ftwophi}}\,dv(w)<\infty, \end{equation} which is enough to
conclude that the operator $T$ initially defined on the linear span of the reproducing kernels extends to a bounded
operator on $\Fpphi$ for $1 \leq p \leq \infty$ (see Section \ref{Fock}).  To show that the operator is compact, we impose the following additional assumptions on
$T$: \begin{equation}\label{assump-Fock}
 \lim_{r\to\infty}\sup_{z\in\C^n}\int_{D(z,r)^c}\abs{\ip{Tk_z}{k_w}_{\Ftwophi}}\,dv(w)=0, \hspace{.5cm}
 \lim_{r\to\infty}\sup_{z\in\C^n}\int_{D(z,r)^c}\abs{\ip{T^*k_z}{k_w}_{\Ftwophi}}\,dv(w)=0.
\end{equation} \begin{definition} \label{sufficient_local} We will say that a linear operator $T$ on $\Fpphi$ is weakly
localized (and for convenience write $T \in \AaFock$) if it satisfies the conditions~\eqref{assump1-Fock} and~\eqref{assump-Fock}. \end{definition} Note that every
sufficiently localized operator on $\mathcal{F}^2$ in the sense of Xia and Zheng obviously satisfies~\eqref{assump1-Fock}
and~\eqref{assump-Fock} and is therefore weakly localized in our sense too. Now if $D(z, r)$ is the Euclidean ball with center $z$ and radius $r$, and if $\|T\|_{\text{e}}$ denotes the essential norm of a bounded operator $T$ on $\Fpphi$ then the following theorem is one of the main results of this paper:

\begin{thm} \label{local-Fock} Let $ 1 < p < \infty$ and let $T$ be an operator on $\Fpphi$ which belongs to the norm closure of $\AaFock$. Then there exists $r, C > 0$ (both depending on $T$) such that \begin{equation*} \|T\|_{\text{e}} \leq C \limsup_{|z| \rightarrow \infty} \sup_{w \in D(z, r)} \abs{\ip{Tk_z}{k_w}}. \end{equation*}
In particular, if \begin{equation*} \lim_{|z| \rightarrow \infty} \|Tk_z\|_{\Fpphi} = 0 \end{equation*} then $T$ is compact on $\Fpphi$.  \end{thm}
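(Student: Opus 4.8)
The plan is to establish the displayed inequality first for $T\in\AaFock$ and then to pass to the norm closure by approximation. I would begin by fixing a separated lattice $\{a_j\}_j\subset\C^n$ whose mesh is small enough that, for $1<p<\infty$ and its conjugate exponent $q$, both $\Fpphi$ and $\Fqphi$ admit atomic decompositions through it: bounded linear maps $f\mapsto(c_j(f))_j\in\ell^p$ (resp.\ $\ell^q$) with $f=\sum_j c_j(f)\,k_{a_j}$, and the synthesis map $(c_j)_j\mapsto\sum_j c_j k_{a_j}$ bounded from $\ell^p$ to $\Fpphi$ (resp.\ $\ell^q$ to $\Fqphi$). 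I will also use the two standard facts from \cite{SV} and Section~\ref{Fock}: the sub-mean-value estimate $|g(z)|e^{-\phi(z)}\lesssim\int_{D(z,1)}|g(w)|e^{-\phi(w)}\,dv(w)$ for entire $g$, and $\|k_w\|_{\Fqphi}\lesssim 1$. With $E_R$ the finite-rank operator $E_R f=\sum_{a_j\in D(0,R)}c_j(f)\,k_{a_j}$, compactness of $TE_R$ gives
\begin{equation*}
\|T\|_{\text{e}}\le\liminf_{R\to\infty}\|T(I-E_R)\|_{\Fpphi\to\Fpphi}.
\end{equation*}

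Next I would pass to the matrix $\widehat T_{ij}=\ip{Tk_{a_j}}{k_{a_i}}$ and its truncation $\widehat T^{(R)}_{ij}=\widehat T_{ij}\mathbf 1_{\{a_j\notin D(0,R)\}}$. Condition~\eqref{assump1-Fock}, combined with the sub-mean-value estimate and the separation of the lattice, shows that $\widehat T$ (hence $\widehat T^{(R)}$) has uniformly $\ell^1$-bounded rows and columns, so it is bounded on $\ell^p$; and expanding $f$ and a test function $g\in\Fqphi$ through the atomic decompositions gives $\|T(I-E_R)\|_{\Fpphi\to\Fpphi}\lesssim\|\widehat T^{(R)}\|_{\ell^p\to\ell^p}$, up to the (universal) frame constants. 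Now fix $r\ge 1$ and $\epsilon>0$, write $M_r=\limsup_{|z|\to\infty}\sup_{w\in D(z,r)}\abs{\ip{Tk_z}{k_w}}$, and split each row/column sum of $\widehat T^{(R)}$ into a near-diagonal part ($|a_i-a_j|\le r$) and an off-diagonal part ($|a_i-a_j|>r$). The near-diagonal part has $\lesssim r^{2n}$ terms, and for each of them the condition $a_j\notin D(0,R)$ forces $|a_j|$ large while $a_i\in D(a_j,r)$, so once $R$ exceeds a threshold $R_0(r,\epsilon)$ each such term is at most $\sup_{w\in D(a_j,r)}\abs{\ip{Tk_{a_j}}{k_w}}\le M_r+\epsilon$. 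The off-diagonal part is bounded, via the sub-mean-value estimate and separation, by $\sup_z\int_{D(z,r/2)^c}\abs{\ip{Tk_z}{k_w}}\,dv(w)$ for columns and by the corresponding expression for $T^*$ for rows; call these $\omega_T(r)$ and $\omega_T^*(r)$, both tending to $0$ as $r\to\infty$ by \eqref{assump-Fock}. Schur's test and then $\epsilon\downarrow 0$ now yield, for every $r\ge 1$,
\begin{equation*}
\|T\|_{\text{e}}\le C_0\big(r^{2n}M_r+\omega_T(r)+\omega_T^*(r)\big),
\end{equation*}
where $C_0$ depends only on $n$, $p$ and $\phi$.

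To finish for $T\in\AaFock$ I would optimize over $r$. Since $M_r$ is nondecreasing in $r$, set $M_\infty=\lim_{r\to\infty}M_r$. If $M_\infty=0$ — which happens in particular when $\|Tk_z\|_{\Fpphi}\to 0$, since $\abs{\ip{Tk_z}{k_w}}\le\|Tk_z\|_{\Fpphi}\|k_w\|_{\Fqphi}\lesssim\|Tk_z\|_{\Fpphi}$ — then letting $r\to\infty$ in the last display (the $\omega$-terms vanish) gives $\|T\|_{\text{e}}=0$, i.e.\ $T$ is compact. If $M_\infty>0$, I would choose $r=r(T)$ so large that $M_{r(T)}\ge M_\infty/2$ and $\omega_T(r(T))+\omega_T^*(r(T))\le M_\infty\le 2M_{r(T)}$, so that the last display becomes $\|T\|_{\text{e}}\le C_0\big(r(T)^{2n}+2\big)M_{r(T)}$ — the assertion, with $r=r(T)$ and $C=C_0(r(T)^{2n}+2)$. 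For $T$ only in the norm closure of $\AaFock$, I would approximate $T$ in operator norm by $S\in\AaFock$ and, to keep the off-diagonal tail under control, replace $S$ by a near-diagonal truncation $S^{(\rho)}\in\AaFock$ for which $\omega_{S^{(\rho)}}$ and $\omega_{S^{(\rho)}}^*$ vanish past radius $\rho$ and $\|S-S^{(\rho)}\|$ is small; the conclusion then transfers to $T$ using that $\|\cdot\|_{\text{e}}$ and the functionals $T\mapsto\sup_{w\in D(z,r)}\abs{\ip{Tk_z}{k_w}}$ are Lipschitz in operator norm.

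The main obstacle is the near-diagonal bookkeeping of the second step: I must ensure that the $\lesssim r^{2n}$ near-diagonal entries of $\widehat T^{(R)}$ together contribute only $O(r^{2n}M_r)$, uniformly once the relevant lattice points are far out, and it is precisely the factor $r^{2n}$ that makes $C$ (and $r$) depend on $T$ rather than being absolute. A secondary difficulty is that the $L^1$-type localization quantities $\omega_T,\omega_T^*$ are \emph{not} norm-continuous in $T$, so the passage to the norm closure genuinely needs the truncation device rather than a bare approximation. Everything else — reflexivity of $\Fpphi$ for $1<p<\infty$, boundedness of $T$ from \eqref{assump1-Fock}, and the atomic-decomposition and kernel estimates — is standard and taken from \cite{SV} and Section~\ref{Fock}.
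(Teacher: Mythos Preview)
Your frame-theoretic route through the matrix $\widehat T_{ij}=\ip{Tk_{a_j}}{k_{a_i}}$ is genuinely different from the paper's, which works instead with the covering $\{F_j,G_j\}$ of Proposition~\ref{Covering} and the operator-level localization $\sum_j M_{1_{F_j}}TPM_{1_{G_j}}$ (Proposition~\ref{MainEst2}), and only afterwards runs an essential-norm argument carrying a volume factor $C(r)$ analogous to your $r^{2n}$. For $T\in\AaFock$ your Schur-test argument is sound and yields the inequality $\|T\|_{\mathrm e}\le C_0\bigl(r^{2n}M_r+\omega_T(r)+\omega_T^*(r)\bigr)$; the optimization over $r$ then gives the theorem for such $T$.

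The gap is in the passage to the norm closure. First, the truncation $S^{(\rho)}$ you describe cannot exist: if $\omega_{S^{(\rho)}}(r)=0$ for some finite $r$, then for every $z$ one has $\ip{S^{(\rho)}k_z}{k_w}=0$ for a.e.\ $w\in D(z,r)^c$; since $w\mapsto (S^{(\rho)}k_z)(w)$ is entire, this forces $S^{(\rho)}k_z\equiv 0$ and hence $S^{(\rho)}=0$. If you instead mean a \emph{matrix} truncation lifted back through the frame, the lifted operator need not have vanishing $\omega_{S^{(\rho)}}$, and the map $T\mapsto\widehat T^{\,\mathrm{off\text{-}diag},r}$ is not bounded on $\mathcal B(\ell^p)$ uniformly in $r$ (Schur multiplication by a $\{0,1\}$ pattern is not a uniformly bounded operation). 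More structurally, even granting a truncation with the properties you list, approximating $T$ (with $M_r(T)\equiv 0$) by $S^{(\rho)}$ only gives $\|T\|_{\mathrm e}\lesssim (\rho+1)^{2n}\|T-S^{(\rho)}\|$, and you have no control on $(\rho+1)^{2n}$ as $\|T-S^{(\rho)}\|\to 0$: the rate at which $\omega_S$ decays, and hence the required $\rho$, depends on $S$, which in turn depends on the target accuracy.

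This is precisely why the paper performs the approximation \emph{before} any volume factor appears. The point of Proposition~\ref{MainEst2} is that the localization error $\|TP-\sum_j M_{1_{F_j}}TPM_{1_{G_j}}\|$ is Lipschitz in $T$ with constant of order $N^{1/p}\|P\|$, \emph{independent of $r$}, because the $F_j$ are disjoint and the $G_j$ have overlap bounded by $N$ uniformly in $r$. With that in hand, one fixes $r=r(T)$ once (making the localization error $<\tfrac12\|T\|_{\mathrm e}$) and only then incurs the $C(r)$ factor in the final Hölder step. Your matrix framework does not provide an $r$-independent Lipschitz bound for the off-diagonal piece, so to make your argument go through for the norm closure you would need to import something equivalent to Proposition~\ref{MainEst2}.
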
 \noindent

Now if $\AaordFock$ is the class of sufficiently localized operators on $\Ft$ then note that an application of Proposition $1.4$ in \cite{I} in conjunction with Theorem \ref{local-Fock} immediately proves the following theorem, which provides the previously mentioned generalization of the results in \cite{XZ} (see Section \ref{Fock} for more details).

\begin{thm} \label{local-ordinaryFock} Let $ 1 < p < \infty$ and let $T$ be an operator on $\Fp$ which belongs to the norm closure of $\AaordFock$.  If $\lim_{|z| \rightarrow \infty} \abs{\ip{Tk_z}{k_z}_{\Ft}} = 0$ then $T$ is compact.  \end{thm}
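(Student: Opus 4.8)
The plan is to reduce $\Fp$ to the generalized Bargmann--Fock setting of Theorem~\ref{local-Fock} and then to close the gap between the scalar hypothesis $\ip{Tk_z}{k_z}_{\Ft}\to 0$ and the stronger hypothesis $\norm{Tk_z}_{\Fp}\to 0$ of that theorem by combining Proposition~1.4 of~\cite{I}, which handles the Hilbert space $\Ft$, with an elementary estimate transferring the conclusion to $\Fp$.

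First I would note that $\Fp = \Fpphi$ for the weight $\phi(z) = \tfrac12 |z|^2$: one computes $d d^c\phi = \tfrac12\omega_0$, so the standing curvature hypothesis $c\omega_0 < d d^c\phi < C\omega_0$ holds (for instance with $c = \tfrac14$ and $C = 1$), and $fe^{-\phi}\in L^p(\C^n,dv)$ is exactly the condition defining $\Fp$ in the Introduction. Next, every sufficiently localized operator on $\Ft$ is weakly localized in the sense of Definition~\ref{sufficient_local} for this $\phi$: as $\beta > 2n$, the function $w\mapsto (1+|z-w|)^{-\beta}$ is integrable with a $z$-independent integral whose tails over $D(z,r)^c$ vanish as $r\to\infty$, so~\eqref{SL-Fock} immediately yields~\eqref{assump1-Fock} and~\eqref{assump-Fock}; the corresponding bounds for $T^*$ follow from $|\ip{T^*k_z}{k_w}_{\Ft}| = |\ip{Tk_w}{k_z}_{\Ft}|$ and the symmetry of the right-hand side of~\eqref{SL-Fock} in $z$ and $w$. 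Hence $\AaordFock \subseteq \AaFock$ for this $\phi$, so the norm closure of $\AaordFock$ lies in the norm closure of $\AaFock$, Theorem~\ref{local-Fock} applies to $T$, and it suffices to prove $\lim_{|z|\to\infty}\norm{Tk_z}_{\Fp} = 0$.

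On the Hilbert space $\Ft$ this vanishing is precisely what Proposition~1.4 of~\cite{I} provides: for an operator in the norm closure of the Xia--Zheng class, the (continuous) frame structure of the normalized reproducing kernels $\{k_z\}$ upgrades the scalar condition $\lim_{|z|\to\infty}\ip{Tk_z}{k_z}_{\Ft} = 0$ to the norm condition $\lim_{|z|\to\infty}\norm{Tk_z}_{\Ft} = 0$. I expect this step to be the only genuine obstacle: for an arbitrary bounded operator the Berezin transform exerts no control over $\norm{Tk_z}$, and it is exactly membership in the localized class (encoded by~\eqref{SL-Fock}) that removes this gap, which is also the crux of~\cite{XZ}.

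Finally, to transfer this from $\Ft$ to $\Fp$ for general $1 < p < \infty$, I would use the identity $\ip{Tk_z}{k_w}_{\Ft} = (Tk_z)(w)\,e^{-|w|^2/2}$, which gives $\norm{Tk_z}_{\Fp}^{\,p} = \int_{\C^n}|\ip{Tk_z}{k_w}_{\Ft}|^p\,dv(w)$, together with the Cauchy--Schwarz bound $\sup_{w\in\C^n}|\ip{Tk_z}{k_w}_{\Ft}| \le \norm{Tk_z}_{\Ft}$ (recall $\norm{k_w}_{\Ft}=1$) and the uniform bound $\sup_{z\in\C^n}\int_{\C^n}|\ip{Tk_z}{k_w}_{\Ft}|\,dv(w) < \infty$ coming from~\eqref{assump1-Fock}. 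These combine to give $\norm{Tk_z}_{\Fp}^{\,p} \le (\sup_{w}|\ip{Tk_z}{k_w}_{\Ft}|)^{p-1}\int_{\C^n}|\ip{Tk_z}{k_w}_{\Ft}|\,dv(w) \le C\,\norm{Tk_z}_{\Ft}^{\,p-1}$, hence $\norm{Tk_z}_{\Fp} \le C^{1/p}\,\norm{Tk_z}_{\Ft}^{(p-1)/p}\to 0$ as $|z|\to\infty$. Together with Theorem~\ref{local-Fock} this shows $T$ is compact on $\Fp$.
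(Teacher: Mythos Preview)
Your overall strategy coincides with the paper's: identify $\Fp$ with $\Fpphi$ for $\phi(z)=\tfrac12|z|^2$, note that $\AaordFock\subseteq\AaFock$, and combine Theorem~\ref{local-Fock} with Proposition~1.4 of~\cite{I}. That is exactly how the paper argues (see the end of Section~\ref{Fock}). Where you diverge is in \emph{which} part of Theorem~\ref{local-Fock} you invoke, and this detour introduces two problems.

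First, you appear to misquote Proposition~1.4 of~\cite{I}. The paper makes clear (see Proposition~\ref{BerVanProp} and the remark in Section~\ref{ConcRemSec}) that Proposition~1.4 is the Fock-space analogue of Proposition~\ref{BerVanProp}: using the translations $U_z$ it shows that $\ip{Tk_z}{k_z}_{\Ft}\to 0$ is \emph{equivalent} to $\sup_{w\in D(z,r)}\abs{\ip{Tk_z}{k_w}_{\Ft}}\to 0$ for all $r>0$. It does \emph{not} assert $\norm{Tk_z}_{\Ft}\to 0$. Second, your transfer estimate $\norm{Tk_z}_{\Fp}^{\,p}\le(\sup_w\abs{\ip{Tk_z}{k_w}})^{p-1}\int_{\C^n}\abs{\ip{Tk_z}{k_w}}\,dv(w)$ requires the uniform bound \eqref{assump1-Fock} for $T$ itself; but $T$ is only assumed to lie in the \emph{norm closure} of $\AaFock$, and \eqref{assump1-Fock} need not survive passage to the closure (the quantity $\int\abs{\ip{Tk_z}{k_w}}\,dv(w)=\norm{Tk_z}_{\Fonephi}$ is not controlled by the $\Fp$ operator norm when $p>1$).

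Both issues disappear if you use the essential-norm inequality in Theorem~\ref{local-Fock} (equivalently Theorem~\ref{essFock}) rather than its ``in particular'' clause. That bound reads $\norm{T}_{\mathrm e}\le C\limsup_{|z|\to\infty}\sup_{w\in D(z,r)}\abs{\ip{Tk_z}{k_w}_{\Ft}}$, and its right-hand side involves only the $\Ft$ pairing and is therefore \emph{independent of $p$}. Proposition~1.4 of~\cite{I} (correctly read) makes this right-hand side vanish directly from the Berezin-transform hypothesis, with no need to pass through $\norm{Tk_z}_{\Ft}$ or to transfer anything to $\Fp$. This is precisely the paper's route. Incidentally, even granting your claim that $\norm{Tk_z}_{\Ft}\to 0$, Cauchy--Schwarz alone gives $\sup_{w}\abs{\ip{Tk_z}{k_w}_{\Ft}}\le\norm{Tk_z}_{\Ft}\to 0$, so the essential-norm bound would again finish the proof without the transfer step; your final paragraph is superfluous in any case.
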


Let us note that one can easily write the so called ``Fock-Sobolev spaces" from \cite{CZ} as generalized Bargmann-Fock spaces, so that in particular Theorem \ref{local-Fock} immediately applies to these spaces (see \cite{I} for more details).

To state the main result in the Bergman space setting requires some notation.  Let $\B_n$ denote the unit ball in $\C^n$
and let the space $A^p:=A^p(\B_n)$ denote the classical Bergman space, i.e., the collection of all holomorphic functions
on $\B_n$ such that $$ \norm{f}_{A^p}^p:=\int_{\B_n}\abs{f(z)}^p\,dv(z)<\infty. $$ The function
$K_z(w):=(1-\overline{z}w)^{-(n+1)}$ is the reproducing kernel for $A^2$ and $$
k_z(w):=\frac{(1-\abs{z}^2)^{\frac{n+1}{2}}}{(1-\overline{z}w)^{(n+1)}} $$ is the normalized reproducing kernel at the
point $z$.  We also will let $d\lambda$ denote the invariant measure on $\B_n$, i.e., $$
d\lambda(z)=\frac{dv(z)}{(1-\abs{z}^2)^{n+1}}. $$

Now let $1 < p < \infty$ and let $\frac1p + \frac{1}{p'} = 1$.   We are interested in operators $T$ acting on the reproducing kernels of $A^2$ that satisfy the following conditions.  First, we assume
that there exists $0 < \delta < \min\{p, p'\}$  such that \begin{equation}\label{assump1}
\sup_{z\in\B_n}\int_{\B_n}\abs{\ip{Tk_z}{k_w}_{A^2}}\frac{\norm{K_z}^{1 - \frac{2\delta}{p'(n + 1)}} _{A^2}}{\norm{K_w}^{1 - \frac{2\delta}{p'(n + 1)}} _{A^2}}\,d\la(w)<\infty,
\hspace{.5cm}
\sup_{z\in\B_n}\int_{\B_n}\abs{\ip{T^*k_z}{k_w}_{A^2}}\frac{\norm{K_z}^{1 - \frac{2\delta}{p(n + 1)}} _{A^2}}{\norm{K_w}^{1 - \frac{2\delta}{p(n + 1)}}  _{A^2}}\,d\la(w)<\infty.
\end{equation} \noindent These are enough to conclude that the operator $T$ initially defined on the linear span of the
reproducing kernels extends to a bounded operator on $A^p$ (see the comments following the proof of Proposition \ref{MainEst1}). To treat compactness we make the following additional
assumptions on $T$:  there exists $0 < \delta < \min\{p, p'\}$  such that \begin{equation}\label{assump}
 \sup_{z\in\B_n}\int_{D(z,r)^c}\abs{\ip{Tk_z}{k_w}_{A^2}}\frac{\norm{K_z}^{1 - \frac{2\delta}{p'(n + 1)}}  _{A^2}}{\norm{K_w}^{1 - \frac{2\delta}{p'(n + 1)}} _{A^2}}\,d\la(w) \rightarrow 0,
 \hspace{.5cm}
\sup_{z\in\B_n}\int_{D(z,r)^c}\abs{\ip{T^*k_z}{k_w}}\frac{\norm{K_z}^{1 - \frac{2\delta}{p(n + 1)}} _{A^2}}{\norm{K_w}^{1 - \frac{2\delta}{p(n + 1)}} _{A^2}}\,d\la(w) \rightarrow 0
\end{equation} as $r \rightarrow \infty$.

\begin{definition} \label{sufficient_local_Bergman} We say that a linear operator $T$ on $A^p$ is $p$ weakly localized (which we denote by $T \in \AapBerg$) if it
satisfies conditions~\eqref{assump1} and~\eqref{assump}. \end{definition} Note that the condition $0 < \delta < \min\{p, p'\}$ implies that both $1 - \frac{2\delta}{p(n + 1)}$ and $1 - \frac{2\delta}{p'(n + 1)}$ are strictly between $\frac{n - 1}{n + 1}$ and $1$.  Furthermore, note that when $p = p' = 2$, we have that   $ \frac{n - 1}{n + 1} < 1 - \frac{\delta}{(n + 1)} < 1$ precisely when $0 < \delta < 2$.  Thus, in this case we can rewrite condition ~\eqref{assump1} in the following simpler way: there exists $\frac{n-1}{n + 1} < a < 1$ where  \begin{equation*}
\sup_{z\in\B_n}\int_{\B_n}\abs{\ip{Tk_z}{k_w}_{A^2}}\frac{\norm{K_z}^{a} _{A^2}}{\norm{K_w}^{a} _{A^2}}\,d\la(w)<\infty,
\hspace{.5cm}
\sup_{z\in\B_n}\int_{\B_n}\abs{\ip{T^*k_z}{k_w}_{A^2}}\frac{\norm{K_z}^{a} _{A^2}}{\norm{K_w}^{a}  _{A^2}}\,d\la(w)<\infty. \end{equation*}
\noindent Of course, one can similarly rewrite condition ~\eqref{assump} when $p = 2$.

We prove the following result.

\begin{thm} \label{local-Bergman1} Let $1 < p < \infty$ and let $T$ be an operator on $A^p$ which belongs to the norm closure of $\AapBerg$. If \begin{equation*} \lim_{\abs{z}\to 1}\ip{Tk_z}{k_z}_{A^2}=0 \end{equation*}  then $T$ is compact. \end{thm}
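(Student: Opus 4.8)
The plan is to deduce Theorem~\ref{local-Bergman1} from an essential norm estimate that is the Bergman analogue of the first part of Theorem~\ref{local-Fock}, together with a normal families argument showing that, for operators in the norm closure of $\AapBerg$, the vanishing of the Berezin transform already forces the right-hand side of that estimate to vanish.

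\emph{Step 1 (the engine, and the main obstacle).} First I would prove that for $T$ in the norm closure of $\AapBerg$ there exist $r,C>0$, depending on $T$, with
\[
\|T\|_{\text{e}}\ \le\ C\,\limsup_{|z|\to1}\ \sup_{w\in D(z,r)}\bigl|\ip{Tk_z}{k_w}_{A^2}\bigr|.
\]
This is proved just as the corresponding estimate for Theorem~\ref{local-Fock}, now carried out on $A^p(\B_n)$: fix an $r$-separated lattice $\{z_j\}$ in the Bergman metric with a subordinate partition of unity $\{\psi_j\}$, decompose $T$ into the localized pieces $M_{\psi_j}TM_{\psi_j}$ plus a remainder, estimate the remainder and the off-diagonal interactions using the Schur-type bounds behind Proposition~\ref{MainEst1} together with the tail conditions \eqref{assump} (choosing $r=r(T)$ large so those tails are small), and bound the diagonal pieces by $\sup_{w\in D(z_j,r)}|\ip{Tk_{z_j}}{k_w}_{A^2}|$. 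Passing from $\AapBerg$ to its norm closure is routine, since on each $D(z,r)$ one has $|\ip{(T-S)k_z}{k_w}_{A^2}|\lesssim_{r}\|T-S\|_{A^p\to A^p}$. The bookkeeping with the weights $\|K_z\|^{1-2\delta/(p(n+1))}_{A^2}$ is where all the technical work sits, and I expect this step to be the main obstacle.

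\emph{Step 2 (transport to the origin).} Granting Step~1, it remains to show $\sup_{w\in D(z,r)}|\ip{Tk_z}{k_w}_{A^2}|\to0$ as $|z|\to1$. Let $\varphi_z$ be the involutive automorphism of $\B_n$ interchanging $0$ and $z$ and $U_z$ the induced self-adjoint unitary on $A^2$, so that $U_zk_0=k_z$ and $U_zk_\zeta$ is a unimodular multiple of $k_{\varphi_z(\zeta)}$. Setting $T_z:=U_z^{\ast}TU_z$ and using that $\varphi_z$ is a Bergman isometry fixing the pair $\{0,z\}$, a short computation gives
\[
\sup_{w\in D(z,r)}\bigl|\ip{Tk_z}{k_w}_{A^2}\bigr|\ =\ \sup_{\zeta\in D(0,r)}\bigl|\ip{T_zk_0}{k_\zeta}_{A^2}\bigr|,
\]
and moreover $\widetilde{T_z}(\zeta)=\widetilde{T}(\varphi_z(\zeta))$ for every $\zeta$.

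\emph{Step 3 (using the Berezin hypothesis) and conclusion.} Since $|\varphi_z(\zeta)|\to1$ as $|z|\to1$ for each fixed $\zeta$, the hypothesis $\widetilde T\to0$ gives $\widetilde{T_z}(\zeta)\to0$ pointwise. The operators $T_z$ lie in a fixed ball of $B(A^2)$, which is compact in the weak operator topology; as the Berezin transform is continuous in that topology and injective on bounded operators on the Bergman space, every weak operator limit of $\{T_z\}$ as $|z|\to1$ has identically zero Berezin transform, hence equals $0$. Thus $T_z\to0$ in the weak operator topology, so $T_zk_0\to0$ weakly in $A^2$; but $\|T_zk_0\|_{A^2}=\|U_zTk_z\|_{A^2}=\|Tk_z\|_{A^2}\le\|T\|_{A^2\to A^2}$, so $\{T_zk_0\}$ is a bounded, hence normal, family of holomorphic functions, and a bounded family of holomorphic functions tending to $0$ weakly tends to $0$ uniformly on compact subsets of $\B_n$. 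On $\overline{D(0,r)}$ this yields $\sup_{\zeta\in D(0,r)}|\ip{T_zk_0}{k_\zeta}_{A^2}|=\sup_{\zeta\in D(0,r)}(1-|\zeta|^2)^{(n+1)/2}|(T_zk_0)(\zeta)|\to0$, and combining with Steps~1 and~2 we get $\|T\|_{\text{e}}=0$, i.e. $T$ is compact.

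\emph{Remark on $1<p<\infty$.} The transport and Hilbert space steps above are written for $p=2$, where $U_z$ is unitary and $U_zk_0=k_z$; for general $p$ one either runs the same scheme on $A^p$ using the $A^p$-isometries $V_z$ (carrying the extra Jacobian weights, for which the Step~1 estimates are already calibrated), or observes that \eqref{assump1} forces $T$ to be bounded on $A^2$, runs the clean $A^2$ argument, and transfers compactness back to $A^p$ by interpolation.
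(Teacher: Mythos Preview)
Your approach is essentially the paper's: Step~1 is Theorem~\ref{essBerg} (where the paper writes the estimate with the $p$-normalized kernels $k_z^{(p)},k_w^{(p')}$, but for $w\in D(z,r)$ these differ from $k_z,k_w$ only by a constant depending on $r$), and Steps~2--3 are Proposition~\ref{BerVanProp}, with the cosmetic difference that you argue directly via WOT compactness plus a normal--families uniform--on--compacta step, whereas the paper argues by contradiction after fixing a subsequential limit of the image points.

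One caution about your closing remark. Option~1 (rerun Steps~2--3 with the $A^p$-isometries $U_z^{(p)}$ satisfying \eqref{TransOpForm}) is exactly what the paper does and is fine. Option~2, however, is not a safe shortcut: condition~\eqref{assump1} for a given $p\neq2$ carries \emph{different} exponents on the $T$-side and the $T^*$-side, so the symmetric Schur test needed for $A^2$-boundedness does not go through with a single weight $h$, and for $T$ merely in the $A^p$-norm closure of $\AapBerg$ there is no reason to expect $A^2$-boundedness at all; moreover, even granting $A^2$-compactness, ``transfer compactness back to $A^p$ by interpolation'' requires boundedness on an endpoint space on the far side of $p$ from $2$, which you do not have. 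Drop Option~2 and keep Option~1.
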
 

It will be clear that the
method of proof also will work for the weighted Bergman space $A^p _\alpha$, and we leave this to the interested reader to
verify.

Note that this result is known through deep work of Su\'arez, \cite{Sua} in the case of $A^p$ when the operator $T$ belongs to the Toeplitz algebra generated by $L^\infty$ symbols (see also \cite{MSW} for the case of
weighted Bergman spaces.) We will prove below that the  Toeplitz algebra on $A^p$ generated by $L^\infty$ symbols is a subalgebra of the norm closure of $\AapBerg$.   In particular, the results of this paper provide a considerably simpler proof of the main results in \cite{MSW, Sua} for the $p \neq 2$ situation (though it should be noted that a similar simplification when $p = 2$ was provided in \cite{MW}).

The structure of this paper is as follows.  In Section \ref{Bergman} we provide the extension of the the Xia and Zheng
result to the Bergman space on the unit ball $\B_n$, and in particular we prove Theorem~\ref{local-Bergman1}.  In Section \ref{Fock} we prove Theorems \ref{local-Fock} and \ref{local-ordinaryFock} which provides an extension of the Xia
and Zheng result in the case of the generalized Bargmann-Fock spaces. Finally in Section \ref{ConcRemSec} we will briefly discuss some interesting open problems related to these results.

\section{Bergman Space Case} \label{Bergman}

Let $\varphi_z$ be the M\"obius map of $\B_n$ that interchanges $0$ and $z$.  It is well known that $$ 1-\abs{\varphi_z(w)}^2=\frac{(1-\abs{z}^2)(1-\abs{w}^2)}{\abs{1-\overline{z}w}^{2}},  $$ and as a consequence we have that
\begin{equation} \label{Magic} \abs{\ip{k_z}{k_w}_{A^2}}=\frac{1}{\norm{K_{\varphi_z(w)}}_{A^2}}. \end{equation}

Using the automorphism $\varphi_z$, the pseudohyperbolic and Bergman metrics on $\B_n$ are defined by $$
\rho(z,w):=\abs{\varphi_z(w)}\quad\textnormal{ and }\quad \beta(z,w):=\frac{1}{2}\log\frac{1+\rho(z,w)}{1-\rho(z,w)}. $$
Recall that these metrics are connected by $\rho=\frac{e^{2\beta}-1}{e^{2\beta}+1}=\tanh\beta$ and it is well-known that
these metrics are invariant under the automorphism group of $\B_n$.  We let $$ D(z,r):=\{w\in\B_n:\beta(z,w)\leq
r\}=\{w\in\B_n: \rho(z,w)\leq s=\tanh r\}, $$ denote the hyperbolic disc centered at $z$ of radius $r$.
Recall also that the orthogonal (Bergman) projection of $L^2(\B_n, dv)$ onto $A^2$ is given by the integral operator $$
P(f)(z):=\int_{\B_n}\ip{\Kbw}{\Kbz}_{A^2}f(w)dv(w). $$ Therefore, for all $f\in A^2$ we have
\begin{equation} \label{BergResOfId} f(z)=\int_{\B_n}\ip{f}{\kbw}_{A^2}\kbw(z)\,d\lambda(w).\end{equation}

As usual an important ingredient in our treatment will be the Rudin-Forelli estimates, see \cite{Zhu} or \cite{MW}.
Recall the standard Rudin-Forelli estimates: \begin{equation} \label{propA6} \int_{\mathbb{B}_n}
{\frac{\abs{\ip{K_z}{K_w}_{A^2}}^{\frac{r+s}{2}}}{\norm{K_z}^s_{A^2}\norm{K_w}^r_{A^2}}\,d\la(w)}\leq C = C(r,s) <
\infty, \; \; \forall z \in \mathbb{B}_n \end{equation} for all $r>\kappa>s>0$, where $\kappa=\kappa_n:=\frac{2n}{n+1}$.
We will use these in the following form:  For all $\frac{n-1}{n+1}<a<1$ we have that
\begin{equation}\label{rf.n} \int_{\mathbb{B}_n}
\abs{\ip{k_z}{k_w}_{A^2}}\frac{\norm{K_z}^a_{A^2}}{\norm{K_w}^a_{A^2}}\,d\la(w)\leq C = C(a) < \infty, \; \; \forall z
\in \mathbb{B}_n. \end{equation} To see that this is true in the classical Bergman space setting, for a given
$\frac{n-1}{n+1}<a<1$ set $r=1+a$ and $s=1-a > 0$.   Then $r+s=2$, and since $a>\frac{n-1}{n+1}$ we have that
$r=1+a>\frac{2n}{n+1}$. Furthermore since $0 < a < 1$ we have that $0 < s < 1 \leq \frac{2n}{n + 1}$. By plugging this in \eqref{propA6} we obtain~\eqref{rf.n}.

We will also need the following uniform version of the Rudin-Forelli estimates.

\begin{lm} \label{lm-rfloc} Let $\frac{n-1}{n+1}<a<1$. Then \begin{equation}\label{rfloc}
\lim_{R\to\infty}\sup_{z\in\mathbb{B}_n}\int_{D(z,R)^c}
\abs{\ip{k_z}{k_w}_{A^2}}\frac{\norm{K_z}^a_{A^2}}{\norm{K_w}^a_{A^2}}\,d\la(w)= 0. \end{equation} \end{lm}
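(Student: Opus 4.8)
The plan is to reduce the uniform estimate over all $z \in \B_n$ to the single case $z = 0$ by exploiting the M\"obius invariance of the quantities involved, and then to obtain the $z=0$ statement as a tail estimate from the already-established convergent integral \eqref{rf.n}. First I would observe that, by \eqref{Magic}, the integrand can be rewritten purely in terms of the point $\vp_z(w)$: indeed $\abs{\ip{k_z}{k_w}_{A^2}} = \norm{K_{\vp_z(w)}}_{A^2}^{-1}$, and similarly $\norm{K_z}_{A^2}^a / \norm{K_w}_{A^2}^a$ can be expressed via $1 - \abs{\vp_z(w)}^2$ using the identity $1-\abs{\vp_z(w)}^2 = (1-\abs z^2)(1-\abs w^2)/\abs{1-\bar z w}^2$ together with $\norm{K_z}_{A^2}^2 = (1-\abs z^2)^{-(n+1)}$. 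Since $d\lambda$ is invariant under $\vp_z$ and $\vp_z$ is an involution, the change of variables $u = \vp_z(w)$ (so $w = \vp_z(u)$, $\abs u = \rho(z,w)$, hence $u \in D(0,R)^c$ iff $w \in D(z,R)^c$) should turn the integral over $D(z,R)^c$ into an integral over $D(0,R)^c$ of a function of $u$ alone — at which point the supremum over $z$ disappears.

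The key computation in that reduction is to verify that the weight transforms correctly. Writing things out, $\abs{\ip{k_z}{k_w}_{A^2}} \, \norm{K_z}^a_{A^2} / \norm{K_w}^a_{A^2}$ equals $\abs{\ip{k_z}{k_w}_{A^2}}^{?} (1 - \abs{\vp_z(w)}^2)^{?}$ for appropriate exponents; matching powers of $(1-\abs z^2)$, $(1-\abs w^2)$, and $\abs{1-\bar z w}$ on both sides pins these down, and one finds the integrand is (up to the substitution) exactly $\abs{\ip{k_0}{k_u}_{A^2}} \norm{K_0}_{A^2}^a / \norm{K_u}_{A^2}^a = (1-\abs u^2)^{\frac{n+1}{2}(1-a)}$ times the Jacobian-free invariant measure. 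This is the same routine identity underlying the passage from \eqref{propA6} to \eqref{rf.n}, just carried out with $z$ arbitrary rather than $z=0$, so I would not belabor it.

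Once the reduction is in hand, it remains to show $\lim_{R\to\infty} \int_{D(0,R)^c} \abs{\ip{k_0}{k_u}_{A^2}} \norm{K_0}^a_{A^2} \norm{K_u}^{-a}_{A^2} \, d\lambda(u) = 0$. But \eqref{rf.n} with $z = 0$ says precisely that this nonnegative integrand is integrable over all of $\B_n$ against $d\lambda$, and the sets $D(0,R)^c$ decrease to $\emptyset$ as $R \to \infty$ (since $\beta(0,u) \to \infty$ as $\abs u \to 1$ and every $u \in \B_n$ lies in some $D(0,R)$); hence the dominated convergence theorem — equivalently, absolute continuity of the integral — gives the claim. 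The main obstacle, and the only place requiring care, is the bookkeeping of exponents in the M\"obius change of variables: one must confirm that the specific power $a$ appearing on $\norm{K_z}$ and $\norm{K_w}$ is exactly the one that makes the transformed integrand depend on $u$ alone with no residual factor in $z$; any mismatch would leave a $z$-dependent constant and break the uniformity. Since the original integrand is built from $\abs{\ip{k_z}{k_w}_{A^2}}$ (which is manifestly M\"obius-symmetric by \eqref{Magic}) multiplied by a ratio of kernel norms raised to the \emph{same} power, this symmetry does hold, and the uniform bound follows.
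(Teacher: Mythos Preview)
Your proposal contains a genuine gap: the claim that after the substitution $u=\vp_z(w)$ the integrand becomes a function of $u$ alone is false. A direct computation (using $\abs{\ip{k_z}{k_{\vp_z(u)}}_{A^2}}=\norm{K_u}_{A^2}^{-1}$ and $1-\abs{\vp_z(u)}^2=(1-\abs z^2)(1-\abs u^2)/\abs{1-\bar z u}^2$) gives
\[
\abs{\ip{k_z}{k_{\vp_z(u)}}_{A^2}}\,\frac{\norm{K_z}_{A^2}^a}{\norm{K_{\vp_z(u)}}_{A^2}^a}
=(1-\abs u^2)^{\frac{(n+1)(1+a)}{2}}\cdot\abs{1-\bar z u}^{-a(n+1)},
\]
so a residual factor $\abs{1-\bar z u}^{-a(n+1)}$ survives; the supremum over $z$ does \emph{not} disappear, and the integrand is not the $z=0$ integrand from \eqref{rf.n}. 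Your closing heuristic fails for exactly this reason: while $\abs{\ip{k_z}{k_w}_{A^2}}$ depends only on $\rho(z,w)$ and is therefore M\"obius-invariant, the ratio $\norm{K_z}_{A^2}^a/\norm{K_w}_{A^2}^a$ is not invariant under automorphisms, so the combined integrand has no reason to reduce to the $z=0$ case. Moreover, the obvious uniform bound $\abs{1-\bar z u}\geq 1-\abs u$ leads to a majorant $\asymp (1-\abs u)^{-(n+1)(1+a)/2}$ with respect to $dv$, which is not integrable on $\B_n$ for $n\geq 1$, so one cannot rescue the argument by a crude dominated-convergence step either.

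This is precisely where the paper invests its effort. After the same change of variables it arrives at
\[
\int_{D(0,R)^c}\frac{dv(u)}{\abs{1-\bar u z}^{(n+1)a}(1-\abs u^2)^{\frac{n+1}{2}(1-a)}},
\]
still $z$-dependent, and then integrates over spheres first: writing $u=r\xi$ and invoking the standard estimate for $I_c(rz)=\int_{\Sn}\abs{1-r\bar\xi z}^{-(c+n)}d\xi$ with $c=(n+1)a-n$ yields a bound depending only on $r$ (uniform in $z$), after which the remaining radial integral over $(R',1)$ is finite and tends to $0$ as $R'\to 1$. Your outline skips exactly this sphere-integration step, which is what produces the uniformity in $z$.
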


\begin{proof} Notice first that \begin{eqnarray*} \int_{D(z,R)^c}
\abs{\ip{k_z}{k_w}_{A^2}}\frac{\norm{K_z}^a_{A^2}}{\norm{K_w}^a_{A^2}}\,d\la(w)&=&\int_{D(0,R)^c}
\abs{\ip{k_z}{k_{\vp_z(w)}}_{A^2}}\frac{\norm{K_z}^a_{A^2}}{\norm{K_{\vp_z(w)}}^a_{A^2}}\,d\la(w)\\ &=& \int_{D(0,R)^c}
\abs{\ip{k_z}{k_w}_{A^2}}^a\frac{\norm{K_z}^a_{A^2}}{\norm{K_w}_{A^2}}\,d\la(w)\\ &=& \int_{D(0,R)^c}
\frac{\abs{\ip{K_z}{K_w}_{A^2}}^a}{\norm{K_w}^{1+a}_{A^2}}\,d\la(w)\\ &=& \int_{D(0,R)^c}
\frac{dv(w)}{\abs{1-\bar{w}z}^{(n+1)a}(1-\abs{w}^2)^{\frac{n+1}{2}(1-a)}}\\
&=&\int_{R'}^{1}\int_{\mathbb{S}_n}\frac{r^{2n-1}d\xi dr}{\abs{1-zr\overline{\xi}}^{(n+1)a}(1-r^2)^{\frac{n+1}{2}(1-a)}}
\end{eqnarray*} \noindent where in the last integral $R=\log\frac{1+R'}{1-R'}$. Notice that $R'\to1$ when $R\to\infty$ and note that the last integral can be written as $$\int_{R'}^{1}I_{(n+1)a-n}(rz)\frac{r^{2n-1} dr}{(1-r^2)^{\frac{n+1}{2}(1-a)}},$$
where $$I_{c}(z):=\int_{\mathbb{S}_n}\frac{d\xi}{\abs{1-zr\overline{\xi}}^{c+n}}.$$   By standard estimates (see \cite{Zhu}*{p. 15} for example), we have that
$$I_{(n+1)a-n}(rz)\lesssim
\begin{cases}
1, \hspace{.5cm} \mbox{if } \hspace{.2cm} (n+1)a-n<0 \\
\log\frac{1}{1-|rz|^2}, \hspace{.5cm} \text{if } \hspace{.2cm} (n+1)a-n=0 \\
\frac{1}{(1-|rz|^2)^{(n+1)a-n}}, \hspace{.5cm} \text{if } \hspace{.2cm} (n+1)a-n>0,

\end{cases}$$
which gives us that

$$ \int_{D(z,R)^c}
\abs{\ip{k_z}{k_w}_{A^2}}\frac{\norm{K_z}^a_{A^2}}{\norm{K_w}^a_{A^2}}\,d\la(w)\lesssim  \begin{cases}
\int_{R'}^1 \frac{r^{2n-1}}{(1-r^2)^{\frac{n+1}{2}(1-a)}}dr, \hspace{.5cm} \mbox{if } \hspace{.2cm} (n+1)a-n<0 \\
\int_{R'}^1 \log\frac{1}{1-r^2}\frac{r^{2n-1}}{(1-r^2)^{\frac12}}dr\, \hspace{.5cm} \text{if } \hspace{.2cm} (n+1)a-n=0 \\
\int_{R'}^1 \frac{r^{2n-1}}{(1-r^2)^{(n+1)a-n+\frac{n+1}{2}(1-a)}}dr, \hspace{.5cm} \text{if } \hspace{.2cm} (n+1)a-n>0

\end{cases}$$

Since $a < 1$, it is easy to see that all the functions appearing on the right hand side are integrable on $(0,1)$. Therefore, we obtain the desired conclusion by taking the limit as $R\to \infty$ (which is the same as $R'\to 1$).


\end{proof}



First, we want to make sure that the class of weakly localized operators is large enough to contain some interesting
operators. This is indeed true since every Toeplitz operator with a bounded symbol belongs to this class.

\begin{prop}\label{T-Berg} Each Toeplitz operator $T_u$ on $A^p$ with a bounded symbol $u(z)$ is in $\AapBerg$ for any $1 < p < \infty$.
\end{prop}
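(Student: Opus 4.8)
The plan is to verify conditions \eqref{assump1} and \eqref{assump} directly for $T = T_u$ with $u \in L^\infty(\B_n)$. The key observation is that for a Toeplitz operator, $\ip{T_u k_z}{k_w}_{A^2} = \ip{u k_z}{k_w}_{A^2} = \int_{\B_n} u(\zeta) k_z(\zeta) \overline{k_w(\zeta)}\, dv(\zeta)$, so that
\[
\abs{\ip{T_u k_z}{k_w}_{A^2}} \leq \norm{u}_\infty \int_{\B_n} \abs{k_z(\zeta)}\,\abs{k_w(\zeta)}\, dv(\zeta).
\]
Moreover, since $T_u^* = T_{\bar u}$ is again a Toeplitz operator with bounded symbol, it suffices to check the first inequality in each of \eqref{assump1} and \eqref{assump}; the second follows by the identical argument with $u$ replaced by $\bar u$ and $p$ replaced by $p'$. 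So the whole proof reduces to estimating, for $\frac{n-1}{n+1} < a < 1$ (taking $a = 1 - \frac{2\delta}{p'(n+1)}$ for a suitable small $\delta$), the quantity
\[
\int_{\B_n} \left( \int_{\B_n} \abs{k_z(\zeta)}\,\abs{k_w(\zeta)}\, dv(\zeta) \right) \frac{\norm{K_z}^a_{A^2}}{\norm{K_w}^a_{A^2}}\, d\la(w).
\]

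First I would apply Fubini to interchange the order of integration, bringing the $dv(\zeta)$ integral outside, and rewrite things in terms of the reproducing-kernel pointwise estimates. Using $\abs{k_z(\zeta)} = \abs{\ip{k_z}{k_\zeta}} \norm{K_\zeta}_{A^2}$ (which follows from $k_\zeta(\zeta) = \norm{K_\zeta}_{A^2}$ and the reproducing property, up to the unimodular ambiguity that is irrelevant for absolute values), the inner factor $\abs{k_z(\zeta)}\abs{k_w(\zeta)}$ becomes $\abs{\ip{k_z}{k_\zeta}}\,\abs{\ip{k_\zeta}{k_w}}\,\norm{K_\zeta}^2_{A^2}$, and $dv(\zeta) = (1-|\zeta|^2)^{n+1}\, d\la(\zeta) = \norm{K_\zeta}^{-2}_{A^2}\, d\la(\zeta)$, so the $\norm{K_\zeta}^2$ factors cancel cleanly. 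We are then left with a double integral of $\abs{\ip{k_z}{k_\zeta}}\,\abs{\ip{k_\zeta}{k_w}}$ against the weight $\norm{K_z}^a / \norm{K_w}^a$ in $d\la(\zeta)\, d\la(w)$. Inserting the telescoping factor $\norm{K_\zeta}^a / \norm{K_\zeta}^a$ splits the weight as $\frac{\norm{K_z}^a}{\norm{K_\zeta}^a} \cdot \frac{\norm{K_\zeta}^a}{\norm{K_w}^a}$, and now the $w$-integral and the $\zeta$-integral each have exactly the form controlled by the Rudin-Forelli estimate \eqref{rf.n}: integrating in $w$ first gives a constant $C(a)$ uniformly in $\zeta$, and then integrating in $\zeta$ gives another $C(a)$ uniformly in $z$. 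This proves \eqref{assump1}.

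For \eqref{assump} I would argue the same way but track where the region $D(z,R)^c$ appears. If $w \in D(z,R)^c$ and $\zeta \in D(z, R/2)$, then $\beta(\zeta, w) \geq \beta(z,w) - \beta(z,\zeta) \geq R/2$ by the triangle inequality for the Bergman metric, so $\zeta$ and $w$ are far apart; otherwise $\zeta \in D(z, R/2)^c$ and $z, \zeta$ are far apart. Splitting the $\zeta$-integral accordingly and applying the \emph{uniform} Rudin-Forelli estimate of Lemma \ref{lm-rfloc} to whichever of the two inner integrals is taken over the complement of a large disc (and the ordinary estimate \eqref{rf.n} to the other), both pieces tend to $0$ as $R \to \infty$, uniformly in $z$. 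The main obstacle I anticipate is purely bookkeeping: keeping the exponents of the reproducing-kernel norms matched so that each one-variable integral falls precisely in the admissible range $\frac{n-1}{n+1} < a < 1$ of the Rudin-Forelli estimates, and handling the two weights in \eqref{assump1}--\eqref{assump} (with $p$ versus $p'$ in the exponent) uniformly — but the condition $0 < \delta < \min\{p,p'\}$ was arranged precisely so that both $1 - \frac{2\delta}{p(n+1)}$ and $1 - \frac{2\delta}{p'(n+1)}$ lie in that range, so no genuine difficulty should arise.
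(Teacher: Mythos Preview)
Your proposal is correct and follows essentially the same route as the paper's proof: both bound $\abs{\ip{T_u k_z}{k_w}}$ by $\norm{u}_\infty \int_{\B_n} \abs{\ip{k_z}{k_\zeta}}\abs{\ip{k_\zeta}{k_w}}\,d\lambda(\zeta)$, insert the telescoping factor $\norm{K_\zeta}^a/\norm{K_\zeta}^a$, and then split the $\zeta$-integral at radius $r/2$ so that one of the two iterated integrals is over the complement of a large disc and is controlled by Lemma~\ref{lm-rfloc} while the other is controlled by the standard Rudin--Forelli estimate~\eqref{rf.n}. The only cosmetic difference is that the paper treats condition~\eqref{assump} directly (with \eqref{assump1} implicit), whereas you first verify~\eqref{assump1} separately by two applications of~\eqref{rf.n}.
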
 \begin{proof} Clearly it is enough to show that \begin{equation*}
 \sup_{z\in\B_n}\int_{D(z,r)^c}\abs{\ip{T_u k_z}{k_w}_{A^2}}\frac{\norm{K_z}^{a}  _{A^2}}{\norm{K_w}^{a} _{A^2}}\,d\la(w) \rightarrow 0,
 \hspace{.5cm}
\sup_{z\in\B_n}\int_{D(z,r)^c}\abs{\ip{T_{\overline{u}} k_z}{k_w}}\frac{\norm{K_z}^{a} _{A^2}}{\norm{K_w}^{a} _{A^2}}\,d\la(w) \rightarrow 0
\end{equation*} as $r \rightarrow \infty$ for all $\frac{n - 1}{n + 1} < a < \infty$.

 By definition $$ T_uk_z(w)=P(uk_z)(w)=\int_{\B_n}\ip{K_x}{K_w}_{A^2}u(x)k_z(x)\,dv(x). $$
Therefore, \begin{eqnarray*} \abs{\ip{T_uk_z}{k_w}_{A^2}} & \leq &
\int_{\B_n}\abs{\ip{k_w}{k_x}_{A^2}}\abs{u(x)}\abs{\ip{k_z}{k_x}_{A^2}}\,d\la(x)\\
 & \leq & \left\Vert u\right\Vert_{\infty}\int_{\B_n}\abs{\ip{k_w}{k_x}_{A^2}\ip{k_x}{k_z}_{A^2}}\,d\la(x).
\end{eqnarray*} Now for $z,x\in\B_n$, set $$
I_z(x):= \abs{\ip{k_x}{k_z}_{A^2}} \int_{D(z,r)^c}\abs{\ip{k_w}{k_x}_{A^2}}\frac{\norm{K_z}^a_{A^2}}{\norm{K_w}^a_{A^2}}\,d\la(w)
$$ First note that \begin{eqnarray*}
\int_{D(z,r)^c}\abs{\ip{T_uk_z}{k_w}_{A^2}}\frac{\norm{K_z}^a_{A^2}}{\norm{K_w}^a_{A^2}}\,d\la(w) &\leq& \left\Vert
u\right\Vert_{\infty}
\int_{D(z,r)^c}\int_{\B_n}\abs{\ip{k_w}{k_x}_{A^2}\ip{k_x}{k_z}_{A^2}}\,d\la(x)\frac{\norm{K_z}^a_{A^2}}{\norm{K_w}^a_{A^2}}\,d\la(w)\\
&=& \left\Vert u\right\Vert_{\infty}
\int_{\B_n}\int_{D(z,r)^c}\abs{\ip{k_w}{k_x}_{A^2}}\frac{\norm{K_z}^a_{A^2}}{\norm{K_w}^a_{A^2}}\,d\la(w)\abs{\ip{k_x}{k_z}_{A^2}}\,d\la(x)\\
& = & \left\Vert u\right\Vert_{\infty} \int_{\B_n} I_z(x) \,d\la(x)\\ &=& \left\Vert
u\right\Vert_{\infty}\left(\int_{D(z,\frac{r}{2})}+\int_{D\left(z,\frac{r}{2}\right)^c}\right)I_z(x)\,d\lambda(x).
\end{eqnarray*} To estimate the first integral notice that for $x\in D\left(z,\frac{r}{2}\right)$ we have
$D(z,r)^c\subset D\left(x,\frac{r}{2}\right)^c$. Therefore, the first integral is no greater than

$$
\int_{D(z,\frac{r}{2})}\int_{D(x,\frac{r}{2})^c}\abs{\ip{k_w}{k_x}_{A^2}}\frac{\norm{K_z}^a_{A^2}}{\norm{K_w}^a_{A^2}}\,d\la(w)\abs{\ip{k_x}{k_z}_{A^2}}\,d\la(x).$$
It is easy to see that the last expression is no greater than $C(a)
A\left(\frac{r}{2}\right)$, where $$A(r)=\sup_{z\in\B_n}\int_{D(z,r)^c}
\abs{\ip{k_z}{k_w}_{A^2}}\frac{\norm{K_z}^a_{A^2}}{\norm{K_w}^a_{A^2}}\,d\la(w),$$ and $C(a)$ is just the bound from the
standard Rudin-Forelli estimates~\eqref{rf.n}.

Estimating the second integral is simpler. The second integral is clearly no greater than

$$
\int_{D\left(z,\frac{r}{2}\right)^c}\int_{\B_n}\abs{\ip{k_w}{k_x}_{A^2}}\frac{\norm{K_z}^a_{A^2}}{\norm{K_w}^a_{A^2}}\,d\la(w)\abs{\ip{k_x}{k_z}_{A^2}}\,d\la(x).$$
By the standard Rudin-Forelli estimates~\eqref{rf.n} the inner integral is no greater than $$
C(a)\frac{\norm{K_z}^a_{A^2}}{\norm{K_x}^a_{A^2}},$$ where the constant $C(a)$ is independent of $z$ and $x$. So, the
whole integral is bounded by $C(a)A\left(\frac{r}{2}\right)$. Therefore

$$\sup_{z\in\B_n} \int_{D(z,r)^c}\abs{\ip{T_uk_z}{k_w}_{A^2}}\frac{\norm{K_z}^a}{\norm{K_w}^a}\,d\la(w)\leq \left\Vert
u\right\Vert_{\infty}\left(C(a)A\left(\frac{r}{2}\right)+C(a)A\left(\frac{r}{2}\right)\right).$$ Applying the uniform
Rudin-Forelli estimates~\eqref{rfloc} in Lemma \ref{lm-rfloc} completes the proof since $2C(a)\left\Vert
u\right\Vert_{\infty}A\left(\frac{r}{2}\right)\to 0$ as $r\to\infty$. \end{proof}

We next show that the class of weakly localized operators forms a $*$-algebra.

\begin{prop}\label{C*-Berg} If $1 < p < \infty$ then $\AapBerg$ is an algebra.  Furthermore, $\AatwoBerg$ is a $*-$algebra. \end{prop}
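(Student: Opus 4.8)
The plan is to check the two structural claims in turn, closure under composition being the only substantial one. Closure under scalar multiples is immediate, since $\abs{\ip{(cT)k_z}{k_w}}=|c|\,\abs{\ip{Tk_z}{k_w}}$ and $(cT)^*=\bar c\,T^*$, and closure under the adjoint when $p=2$ is essentially automatic: for $p=p'=2$ the two integrals in \eqref{assump1} are interchanged by $T\leftrightarrow T^*$ and, as noted after Definition~\ref{sufficient_local_Bergman}, both carry the \emph{same} exponent $1-\frac{\delta}{n+1}$; the same happens in \eqref{assump}. Hence the defining conditions for $T$ and for $T^*$ coincide, so $T\in\AatwoBerg$ iff $T^*\in\AatwoBerg$. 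For closure under sums one uses $\abs{\ip{(S+T)k_z}{k_w}}\le\abs{\ip{Sk_z}{k_w}}+\abs{\ip{Tk_z}{k_w}}$ together with $(S+T)^*=S^*+T^*$, after first arranging (see the last paragraph) that $S$ and $T$ satisfy \eqref{assump1} and \eqref{assump} with a common admissible $\delta$.

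The heart of the matter is that $ST\in\AapBerg$ whenever $S,T\in\AapBerg$, and the tool is the reproducing identity \eqref{BergResOfId}, which on the dense span of reproducing kernels gives
\[ \ip{STk_z}{k_w}_{A^2}=\ip{Tk_z}{S^*k_w}_{A^2}=\int_{\B_n}\ip{Tk_z}{k_u}_{A^2}\,\ip{Sk_u}{k_w}_{A^2}\,d\lambda(u), \]
so that $\abs{\ip{STk_z}{k_w}_{A^2}}\le\int_{\B_n}\abs{\ip{Tk_z}{k_u}_{A^2}}\,\abs{\ip{Sk_u}{k_w}_{A^2}}\,d\lambda(u)$. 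Substituting this into the first integral of \eqref{assump1} written for $ST$, applying Tonelli, and factoring the weight as $\frac{\norm{K_z}^a_{A^2}}{\norm{K_w}^a_{A^2}}=\frac{\norm{K_z}^a_{A^2}}{\norm{K_u}^a_{A^2}}\cdot\frac{\norm{K_u}^a_{A^2}}{\norm{K_w}^a_{A^2}}$ with $a:=1-\frac{2\delta}{p'(n+1)}$, the inner integral in $w$ is bounded uniformly in $u$ by the first constant in \eqref{assump1} for $S$, and the remaining integral in $u$ is bounded by the first constant in \eqref{assump1} for $T$. The integral for $(ST)^*$ is handled identically after writing $(ST)^*=T^*S^*$ and $\ip{T^*S^*k_z}{k_w}_{A^2}=\int_{\B_n}\ip{S^*k_z}{k_u}_{A^2}\,\ip{T^*k_u}{k_w}_{A^2}\,d\lambda(u)$, now invoking the second integrals in \eqref{assump1} for $T$ and for $S$.

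For \eqref{assump} the cutoff prevents a direct factoring, and here I would reuse the two-step splitting from the proof of Proposition~\ref{T-Berg}. After the same substitution and Tonelli, one is left with
\[ \int_{\B_n}\abs{\ip{Tk_z}{k_u}_{A^2}}\,\frac{\norm{K_z}^a_{A^2}}{\norm{K_u}^a_{A^2}}\,\Big(\int_{D(z,r)^c}\abs{\ip{Sk_u}{k_w}_{A^2}}\,\frac{\norm{K_u}^a_{A^2}}{\norm{K_w}^a_{A^2}}\,d\lambda(w)\Big)\,d\lambda(u), \]
and one splits the $u$-integral over $D(z,\frac{r}{2})$ and $D(z,\frac{r}{2})^c$. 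On $D(z,\frac{r}{2})^c$ one enlarges the inner $w$-integral to all of $\B_n$ and bounds it by the constant from \eqref{assump1} for $S$, leaving $\int_{D(z,r/2)^c}\abs{\ip{Tk_z}{k_u}_{A^2}}\frac{\norm{K_z}^a_{A^2}}{\norm{K_u}^a_{A^2}}\,d\lambda(u)$, which tends to $0$ as $r\to\infty$ by \eqref{assump} for $T$. On $D(z,\frac{r}{2})$ one uses $D(z,r)^c\subset D(u,\frac{r}{2})^c$, so the inner integral is at most $\sup_u\int_{D(u,r/2)^c}\abs{\ip{Sk_u}{k_w}_{A^2}}\frac{\norm{K_u}^a_{A^2}}{\norm{K_w}^a_{A^2}}\,d\lambda(w)$, which tends to $0$ by \eqref{assump} for $S$, while the remaining $u$-integral over $D(z,\frac{r}{2})$ is dominated by the constant from \eqref{assump1} for $T$. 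Hence the whole expression tends to $0$ as $r\to\infty$, uniformly in $z$, and the cutoff for $(ST)^*=T^*S^*$ is symmetric. Combined with the first paragraph, this shows that $\AapBerg$ is an algebra and that $\AatwoBerg$ is a $*$-algebra.

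The step I expect to be the main obstacle is the bookkeeping of the parameter $\delta$. The weight $\frac{\norm{K_z}^a}{\norm{K_w}^a}$ factors through an intermediate point $u$ only when the \emph{same} exponent $a$ appears on both halves, so the estimates above require $S$ and $T$ (and $S^*$, $T^*$) to satisfy \eqref{assump1} and \eqref{assump} for one common $\delta$. I would therefore first establish that membership in $\AapBerg$ does not depend on the particular admissible $\delta$: if $T$ satisfies \eqref{assump1} and \eqref{assump} for some $\delta\in(0,\min\{p,p'\})$, it does so for every such $\delta$. The point is that on a hyperbolic disc $D(z,R)$ the ratio $\norm{K_z}_{A^2}/\norm{K_w}_{A^2}$ is pinched between constants depending only on $R$, so replacing $a$ by another admissible exponent alters the integrand by at most an $R$-dependent factor on $D(z,R)$, while the change on $D(z,R)^c$ is controlled by combining the smallness furnished by \eqref{assump} with the Rudin--Forelli estimate \eqref{rf.n} and its uniform refinement in Lemma~\ref{lm-rfloc}. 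With a common $\delta$ secured for $S$ and $T$, the remainder of the argument is a routine repetition of the devices already used in the proof of Proposition~\ref{T-Berg}.
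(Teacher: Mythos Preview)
Your approach is essentially the same as the paper's: you expand $\ip{STk_z}{k_w}$ via the resolution of the identity \eqref{BergResOfId}, factor the weight through the intermediate point, and for the truncated integrals you reuse the $D(z,r/2)$ versus $D(z,r/2)^c$ splitting from the proof of Proposition~\ref{T-Berg}. The paper does exactly this (with the roles of $S$ and $T$ permuted, writing $\ip{TSk_z}{k_w}=\ip{Sk_z}{T^*k_w}$), and then simply says ``proceeding exactly as in the proof of the previous Proposition.''

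Where you go beyond the paper is in flagging the $\delta$-compatibility issue: the factoring $\frac{\|K_z\|^a}{\|K_w\|^a}=\frac{\|K_z\|^a}{\|K_u\|^a}\cdot\frac{\|K_u\|^a}{\|K_w\|^a}$ only works if $S$ and $T$ satisfy \eqref{assump1}--\eqref{assump} with the \emph{same} exponent $a$, and the paper's proof silently writes a single $\delta$ throughout without justifying this. So you are right to worry, and the paper itself does not address it.

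That said, your proposed resolution---that membership in $\AapBerg$ is independent of the admissible $\delta$---is not established by your sketch. On $D(z,R)$ the ratio $\|K_z\|/\|K_w\|$ is indeed pinched and the exponent can be changed at the cost of an $R$-dependent constant. But on $D(z,R)^c$ the ratio $\bigl(\|K_z\|/\|K_w\|\bigr)^{a_2-a_1}$ is unbounded in both directions, and the smallness you have from \eqref{assump} is only in the $a_1$-weighted integral; Lemma~\ref{lm-rfloc} controls the identity operator, not $T$, so it is not clear how to ``combine'' them to transfer the bound to the $a_2$-weighted integral. If you want to pursue this route you will need a genuine argument here (or an alternative device, such as showing that \eqref{assump1}--\eqref{assump} for one admissible $\delta$ implies a pointwise or mixed-weight estimate that interpolates). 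As it stands, your proof handles the $\delta$ issue no less rigorously than the paper does, but neither treatment actually closes this gap.
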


\begin{proof} It is trivial that $T\in \AatwoBerg$ implies $T^*\in\AatwoBerg$. It is also easy to see that any linear combination of
two operators in $\AapBerg$ must be also in $\AapBerg$. It remains to prove that if $T, S\in \AapBerg$, then $TS\in \AapBerg$. To that end, we have that
\begin{align*}
\int_{D(z,r)^c} & \abs{\ip{TSk_z}{k_w}_{A^2}}\frac{\norm{K_z}^{1 - \frac{2\delta}{p'(n + 1)}} _{A^2}}{\norm{K_w}^{1 - \frac{2\delta}{p'(n + 1)}} _{A^2}}\,d\la(w) \\ & = \int_{D(z,r)^c}\abs{\ip{Sk_z}{T^*k_w}_{A^2}}\frac{\norm{K_z}^{1 - \frac{2\delta}{p'(n + 1)}}  _{A^2}}{\norm{K_w}^{1 - \frac{2\delta}{p'(n + 1)}}  _{A^2}}\,d\la(w)\\
\\ &=
\int_{D(z,r)^c}\abs{\int_{\B_n}\ip{Sk_z}{k_x}_{A^2}\ip{k_x}{T^*k_w}_{A^2}\,d\la(x)}\frac{\norm{K_z}^{1 - \frac{2\delta}{p'(n + 1)}}  _{A^2}}{\norm{K_w}^{1 - \frac{2\delta}{p'(n + 1)}}  _{A^2}}\,d\la(w)\\
&\leq
\int_{\B_n}\int_{D(z,r)^c}\abs{\ip{k_x}{T^*k_w}_{A^2}}\frac{d\la(w)}{\norm{K_w}^{1 - \frac{2\delta}{p'(n + 1)}}  _{A^2}}\abs{\ip{Sk_z}{k_x}_{A^2}}\norm{K_z}^{1 - \frac{2\delta}{p'(n + 1)}}
\,d\la(x).\\ \end{align*} Proceeding exactly as in the proof of the previous Proposition and using the conditions
following from $T, S\in \AapBerg$ in the place of the local Rudin-Forelli estimates~\eqref{rfloc} (and replacing $a$ with ${1 - \frac{2\delta}{p(n + 1)}})$ we obtain that $$
\lim_{r\to\infty}\sup_{z\in\B_n}\int_{D(z,r)^c}\abs{\ip{TSk_z}{k_w}_{A^2}}\frac{\norm{K_z}^{1 - \frac{2\delta}{p(n + 1)}} _{A^2}}{\norm{K_w}^{1 - \frac{2\delta}{p(n + 1)}} _{A^2}}\,d\la(w)=0.
$$ The corresponding condition for $(TS)^*$ is proved in exactly the same way. \end{proof}

We next show that every weakly localized operator can be approximated by infinite sums of well localized pieces. To state
this property we need to recall the following proposition proved in \cite{MW} \begin{prop} \label{Covering_Bergman} There
exists an integer $N>0$ such that for any $r>0$ there is a covering $\FF_r=\{F_j\}$ of $\B_n$ by disjoint Borel sets
satisfying \begin{enumerate} \item[\label{Finite} \textnormal{(1)}] every point of $\B_n$ belongs to at most $N$ of the
sets $G_j:=\{z\in\B_n: d(z, F_j)\leq r\}$, \item[\label{Diameter} \textnormal{(2)}] $\textnormal{diam}_d\, F_j \leq 2r$
for every $j$. \end{enumerate} \end{prop}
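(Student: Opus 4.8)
The plan is to construct the covering $\FF_r$ by hand from the radial structure of $\B_n$ in the Bergman metric $d=\beta$, and to reduce the multiplicity condition (1) to a standard covering lemma on a space of homogeneous type. First I would slice $\B_n$ into the spherical shells $S_m:=\{z\in\B_n:\beta(0,z)\in[mr,(m+1)r)\}$, $m=0,1,2,\dots$. For each $m$ I fix a maximal $r$-separated subset $\{a_{m,j}\}_j$ of $S_m$ (with respect to $\beta$); by maximality it is also an $r$-net of $S_m$. Let $F_{m,j}$ be the Voronoi cell of $a_{m,j}$ inside $S_m$ relative to this net (ties broken by an enumeration of the net), and relabel $\{F_{m,j}\}_{m,j}$ as $\{F_j\}$. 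These are disjoint Borel sets with $\bigcup_j F_j=\bigcup_m S_m=\B_n$, and since every point of $F_{m,j}$ is within $\beta$-distance $r$ of $a_{m,j}$ we get $F_{m,j}\subseteq D(a_{m,j},r)$; hence $\operatorname{diam}_d F_j\le 2r$, which is (2). Moreover $G_{m,j}=\{z:d(z,F_{m,j})\le r\}\subseteq D(a_{m,j},2r)$, so (1) reduces to showing that the multiplicity of $\{D(a_{m,j},2r)\}_{m,j}$ is bounded by a constant depending only on $n$.

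To estimate the multiplicity, fix $z_0$ and suppose $z_0\in D(a_{m,j},2r)$. By the triangle inequality $\beta(0,a_{m,j})$ lies within $2r$ of $\beta(0,z_0)$, and since $a_{m,j}\in S_m$ this confines $m$ to at most five consecutive values; so only boundedly many shells are relevant. It then suffices to bound, for each such $m$, the number of $a_{m,j}$ with $z_0\in D(a_{m,j},2r)$. Such points form a $\beta$-$r$-separated set lying in $\{z:\beta(0,z)=\rho\}\cap D(z_0,2r)$ for $\rho\in[mr,(m+1)r)$, so the whole proposition comes down to the claim that a $\beta$-$r$-separated subset of $\{z:\beta(0,z)=\rho\}\cap D(z_0,2r)$ has cardinality bounded by a constant depending only on $n$, uniformly in $\rho$, $z_0$ and $r$. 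This is the crux. It holds because the geodesic sphere $\{z:\beta(0,z)=\rho\}$, with the restriction of $\beta$ and its surface measure, is a space of homogeneous type with doubling constant independent of $\rho$: for small $\rho$ the metric there is essentially Euclidean, and for large $\rho$ this is the standard comparison of Bergman balls with the non-isotropic balls $\{|1-\langle\cdot,\cdot\rangle|<\delta^2\}$ on $\partial\B_n$ together with $\sigma(\{|1-\langle\xi,\cdot\rangle|<\delta^2\})\asymp\delta^{2n}$ (see \cite{Zhu}). In any space of homogeneous type an $r$-separated set inside a ball of radius $2r$ has cardinality bounded by a fixed power of the doubling constant; applying this in each of the boundedly many relevant shells gives an absolute bound $N=N(n)$ for the multiplicity.

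The main obstacle is precisely this uniform-in-$r$ counting inside a shell. A naive volume comparison on $\B_n$ is useless here, since $\lambda(D(z,2r))/\lambda(D(z,r))$ grows like $e^{2nr}$; worse, one can genuinely pack exponentially many (in $r$) $\beta$-$r$-separated points into a single Bergman ball $D(z_0,2r)$ by clustering them on the geodesic sphere of radius slightly below $2r$ about $z_0$. Thus the proposition is false for an arbitrary maximal $r$-separated set, and the construction above---which forces the lattice points onto spheres about the origin, where the induced geometry is uniformly that of a fixed homogeneous space---is essential. An equivalent and perhaps more transparent route is to replace the shells by a Whitney/Carleson-box dyadic decomposition of $\B_n$ at scale $r$, so that each $F_j$ is the top half of a Carleson box and the $r$-neighborhood of $F_j$ meets only boundedly many boxes (parent, children, siblings in the dyadic tree); I present the shell version only to avoid setting up the dyadic lattice. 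In either approach the constant $2r$ in (2) comes from the freedom in choosing the separation parameter of the within-shell nets.
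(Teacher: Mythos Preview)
The paper does not prove this proposition at all; it is quoted verbatim from \cite{MW}, so there is no ``paper's own proof'' to compare with. What matters, then, is whether your argument stands on its own, and it does not: the step you single out as ``the crux'' fails.

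Your key claim is that a $\beta$-$r$-separated subset of $\{z:\beta(0,z)=\rho\}\cap D(z_0,2r)$ has cardinality bounded by a constant depending only on $n$, uniformly in $\rho$, $z_0$ and $r$. This is false. Take $n=1$ and $z_0$ on the circle $\{\beta(0,\cdot)=\rho\}$ with $\rho\ge r$. For two points on that circle at angular separation $\theta$ one computes directly that their $\beta$-distance equals $t$ exactly when $\sin(\theta/2)=\sinh t/\sinh(2\rho)$. Hence the arc $\{\beta(0,\cdot)=\rho\}\cap D(z_0,2r)$ has angular width $\theta_{2r}\asymp \sinh(2r)/\sinh(2\rho)$, while $r$-separated points on it are spaced at angle at least $\theta_r\asymp \sinh r/\sinh(2\rho)$; the quotient is $\sinh(2r)/\sinh r = 2\cosh r$, so one can pack on the order of $e^{r}$ such points. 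Thus the exponential packing you correctly flag as the obstruction survives intact inside every shell $S_m$ with $m\ge 1$, and restricting to shells buys nothing. The supporting assertion that the geodesic sphere with the restricted Bergman metric is doubling with constant independent of $\rho$ is equally false: the same computation gives doubling ratio $\theta_{2t}/\theta_t\asymp 2\cosh t$ at scale $t$, which is unbounded in $t$, so the homogeneous-space packing lemma cannot deliver an $r$-free bound.

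Your fallback sketch via Whitney/Carleson boxes has the same defect hiding in the phrase ``parent, children, siblings'': in a dyadic tree adapted to Bergman scale $r$ each box has on the order of $e^{cr}$ children, and the $r$-neighbourhood of a box at one level genuinely meets that many boxes at the level below (concretely, a ball $D(z_0,r)$ with $z_0$ at the bottom of its level has horizontal extent $\asymp y_0 e^{r/2}$ just inside the next level down, where the boxes have width only $\asymp y_0$). Getting $N$ independent of $r$ therefore requires a construction considerably more delicate than a maximal $r$-net in shells or a naive dyadic scheme; you would need to consult \cite{MW} for the actual argument.
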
 We use this to prove the following proposition, which is similar to what
appears in \cite{MW}, but exploits condition \eqref{assump}.

\begin{prop}\label{MainEst1} Let $1 < p < \infty$ and let $T$ be in the norm closure of $\AapBerg$.    Then for every $\epsilon > 0$ there
exists $r>0$ such that for the covering $\FF_r=\{F_j\}$ (associated to $r$) from Proposition~\ref{Covering_Bergman}, we have:
\begin{eqnarray*} \norm{ TP-\sum_{j}M_{1_{F_j} }TPM_{1_{G_j} }}_{A^p\to L^p(\B_n, dv) } < \epsilon. \end{eqnarray*}
\end{prop}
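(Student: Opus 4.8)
The plan is to estimate the operator $E_r := TP - \sum_j M_{1_{F_j}} TP M_{1_{G_j}}$ by testing it against an arbitrary $f \in A^p$ and using the reproducing formula \eqref{BergResOfId} together with the covering from Proposition~\ref{Covering_Bergman}. First I would write, for $g \in L^p(\B_n,dv)$ (extending $T$ appropriately via \eqref{assump1}), $g(z) = \int_{\B_n} \ip{g}{k_w}_{A^2} k_w(z)\, d\lambda(w)$, so that $TPf$ has an integral representation with kernel $\ip{Tk_w}{k_{w'}}_{A^2}$ after reproducing on both sides. Concretely, $(TPf)(z) = \int_{\B_n} \langle Tk_w, k_z\rangle_{A^2}\, \widehat{f}(w)\, d\lambda(w)$ where $\widehat f(w) = \langle Pf, k_w\rangle_{A^2}$, and similarly the $j$-th piece of the sum has kernel $1_{F_j}(z) 1_{G_j}(w) \langle Tk_w, k_z\rangle_{A^2}$. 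Since $\{F_j\}$ partitions $\B_n$, for a fixed $z$ there is exactly one $j=j(z)$ with $z \in F_{j}$, so the kernel of $E_r$ is $\langle Tk_w, k_z\rangle_{A^2}\bigl(1 - 1_{G_{j(z)}}(w)\bigr)$, which is supported on pairs $(z,w)$ with $z \in F_{j(z)}$ and $w \notin G_{j(z)}$; such pairs satisfy $\beta(z,w) > r/2$ once $r$ is large, because $\mathrm{diam}_d F_{j(z)} \le 2r$ forces... actually more carefully, $w \notin G_{j(z)}$ means $d(w, F_{j(z)}) > r$, hence $\beta(z,w) > r$ since $z \in F_{j(z)}$.

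The key point is then a Schur-test estimate on this "off-diagonal" kernel. I would introduce the comparison weights $h(z) = \norm{K_z}^{-(1 - \frac{2\delta}{p'(n+1)})}_{A^2}$ and its conjugate-exponent counterpart, and bound
\begin{equation*}
\int_{\B_n} \abs{\langle Tk_w, k_z\rangle_{A^2}} 1_{\{\beta(z,w) > r\}} \, \frac{h(z)}{h(w)} \, d\lambda(w) \le \sup_{z} \int_{D(z,r)^c} \abs{\langle Tk_z, k_w\rangle_{A^2}} \frac{\norm{K_z}^{1 - \frac{2\delta}{p'(n+1)}}_{A^2}}{\norm{K_w}^{1 - \frac{2\delta}{p'(n+1)}}_{A^2}}\, d\lambda(w),
\end{equation*}
which tends to $0$ as $r \to \infty$ by \eqref{assump} (using self-adjointness of the pairing to swap $z,w$ in the kernel of $T^*$ for the dual integral). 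Combined with the analogous estimate for $T^*$ controlling the $z$-integral, the Schur test on $L^p(d\lambda)$ — equivalently on $L^p(dv)$ after absorbing the Jacobian weights, which is exactly why the exponents $1 - \frac{2\delta}{p(n+1)}$ and $1 - \frac{2\delta}{p'(n+1)}$ are split between the two conditions in \eqref{assump1}–\eqref{assump} — gives $\norm{E_r}_{A^p \to L^p} \le C \cdot \omega(r)$ where $\omega(r) \to 0$. Choosing $r$ large enough that $C\omega(r) < \epsilon$ finishes the proof for $T \in \AapBerg$.

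For $T$ merely in the norm closure of $\AapBerg$, I would first handle $T \in \AapBerg$ as above, then pass to the limit: if $\norm{T - T_n}_{A^p \to A^p} \to 0$ with $T_n \in \AapBerg$, note that the map $S \mapsto SP - \sum_j M_{1_{F_j}} SP M_{1_{G_j}}$ has norm bounded by a constant times $\norm{S}_{A^p\to A^p}$ uniformly in $r$ (the finite-overlap property (1), with the bound $N$, makes $\sum_j M_{1_{F_j}} SP M_{1_{G_j}}$ bounded on $L^p$ with norm $\lesssim N \norm{SP}$), so choosing $n$ with $\norm{T - T_n}$ small and then $r$ large for $T_n$ controls $E_r$ for $T$. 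The main obstacle I anticipate is the bookkeeping in the Schur test: one must verify that the two conditions \eqref{assump1} and \eqref{assump}, with the asymmetric exponents in $p$ versus $p'$, are precisely the two Schur inequalities needed to get $L^p$-boundedness of the off-diagonal kernel with respect to $dv$; getting the powers of $\norm{K_z}_{A^2} = (1-\abs{z}^2)^{-(n+1)/2}$ to match the Jacobian $d\lambda/dv = (1-\abs{z}^2)^{-(n+1)}$ correctly across the $1/p + 1/p' = 1$ split is where the definition of $p$-weakly localized was engineered, and is the step that requires genuine care rather than routine computation.
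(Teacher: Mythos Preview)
Your proposal is correct and follows the same route as the paper: both reduce to $T \in \AapBerg$, identify the kernel of the error operator as supported on $\{(z,w):\beta(z,w)>r\}$, and then bound this off-diagonal integral operator by the Schur test, with the two Schur inequalities being exactly the two conditions in \eqref{assump} (the paper's Schur weight $h(w)=\|K_w\|_{A^2}^{2\delta/(pp'(n+1))}$ on $L^p(dv)$ agrees with yours after the change $d\lambda=\|K_\cdot\|_{A^2}^2\,dv$). Your closure argument via the uniform-in-$r$ bound $\bigl\|\sum_j M_{1_{F_j}}SPM_{1_{G_j}}\bigr\|_{A^p\to L^p}\le N^{1/p}\|SP\|$ is a bit more explicit than the paper's one-line reduction, but otherwise the arguments coincide.
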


\begin{proof} By Proposition~\ref{C*-Berg} in conjunction with Proposition \ref{Covering_Bergman} and a simple approximation argument, we may assume that $T \in \AapBerg$. Now define $$ S=TP-\sum_{j}M_{1_{F_j} }TPM_{1_{G_j} }.$$ Given $\epsilon$ choose $r$ large enough so
that \begin{equation*} \sup_{z\in\B_n}\int_{D(z,r)^c} \abs{ \ip{Tk_z}{k_w}_{A^2}}
\frac{\norm{K_z}^{1 - {\frac{2\delta}{p' (n + 1) }}  }_{A^2}}{\norm{K_w}^{1 - {\frac{2\delta}{p' (n + 1) }}  }_{A^2}}\,d\lambda(w)<\epsilon  \end{equation*} and \begin{equation*} \sup_{z\in\B_n}\int_{D(z,r)^c} \abs{ \ip{T^*k_z}{k_w}_{A^2}}\frac{\norm{K_z}^{1 - \frac{2\delta}{p (n + 1) }}    _{A^2}}{\norm{K_w}^{1 - \frac{2\delta}{p (n + 1) }}    _{A^2}}
\,d\lambda(w)<\epsilon. \end{equation*} Now for any $z\in\B_n$ let $z \in F_{j_0}$, so that \begin{eqnarray*} \abs{Sf(z)} & \leq &
\int_{\B_n}\sum_{j}1_{F_j(z)}1_{G_j^c}(w) \abs{ \ip{T^*\Kbz}{\Kbw}_{A^2} }\abs{f(w)} \,dv(w)\\
 & = & \int_{G_{j_0} ^c} \abs{ \ip{T^*\Kbz}{\Kbw}_{A^2} }\abs{f(w)} \,dv(w)\\
 & \leq & \int_{D(z,r)^c} \abs{ \ip{T^*\Kbz}{\Kbw}_{A^2} }\abs{f(w)} \,dv(w).
\end{eqnarray*}

To finish the proof, we will estimate the operator norm of the integral operator on $L^p(\B_n, dv)$ with kernel $1_{D(z, r) ^c} (w)|\langle T^*K_z, K_w\rangle_{A^2}| $ by using the classical Schur test.  To that end, let $h(w) = \|K_w\|_{A^2} ^{\frac{2\delta}{p p '(n + 1)}}$ so that \begin{align*} \int_{\B_n} 1_{D(z, r) ^c} (w)|\langle T^* K_z, K_w\rangle_{A^2}| h(w) ^{p'} \, dv(w) & = \int_{D(z, r)^c} |\langle T^* K_z, K_w\rangle_{A^2}|   \|K_w\|_{A^2} ^{\frac{2\delta}{p (n + 1) }}  \, dv(w) \\ & = \int_{D(z, r)^c} |\langle T^* k_z, k_w\rangle_{A^2}|   \|K_z\| \|K_w\|_{A^2} ^{\frac{2\delta}{p(n + 1) } - 1 }   \, d\lambda(w) \\ & \leq \epsilon  \|K_z\|_{A^2} ^{\frac{2\delta}{p(n + 1)}} = \epsilon h(z)  ^{p'}. \end{align*}

Similarly, we have that \begin{equation*} \int_{\B_n} 1_{D(z, r) ^c} (w)|\langle T^* K_z, K_w\rangle_{A^2}| h(z) ^p \, dv(z) \leq  \epsilon h(w) ^p \end{equation*} which completes the proof.

\end{proof}

It should be noted that a very similar Schur test argument actually proves that condition ~\eqref{assump1} implies that $T$ is bounded on $A^p$.

We can now prove one of our main results whose proof uses the ideas in ~\cite{MW}*{Theorem 4.3} and \cite{I}*{Lemma 5.3}. First, for any $w \in \B_n$ and $1 < p < \infty$,  let $k_w ^{(p)}$ be the ``$p$ - normalized reproducing kernel" defined by \begin{equation*}k_w ^{(p)} (z) = \frac{K(z, w) }{\|K_w\|^\frac{2}{p'} }. \end{equation*} Clearly we have that $k_w ^{(2)} = k_w$ and an easy computation tells us that $\|k_w ^{(p)}\|_{A^p} \approx 1$ (where obviously we have equality when $p = 2$).

\begin{thm}\label{essBerg}
 Let $1 < p < \infty$ and let $T$ be in the norm closure of $\AapBerg$.  Then there exists $r, C > 0$ (both depending on $T$) such that \begin{equation*} \|T\|_{\text{e}} \leq C \limsup_{|z| \rightarrow 1^{-}} \sup_{w \in D(z, r)} |\langle Tk_z ^{(p)} , k_w ^{(p')} \rangle_{A^2}| \end{equation*} where $\|T\|_{\text{e}}$ is the essential norm of $T$ as a bounded operator on $A^p$.
 \end{thm}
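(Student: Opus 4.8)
The plan is to decompose $T$ (or rather $TP$, viewed as an operator on $L^p(\B_n,dv)$) into a ``diagonal'' piece that captures the local behavior near the boundary and a ``tail'' piece that is small in norm, and then to show the diagonal piece has essential norm controlled by $\limsup_{|z|\to 1}\sup_{w\in D(z,r)}|\langle Tk_z^{(p)},k_w^{(p')}\rangle|$. Fix $\epsilon>0$ and apply Proposition~\ref{MainEst1} to produce $r>0$ and a covering $\FF_r=\{F_j\}$ with $\|TP-\sum_j M_{1_{F_j}}TPM_{1_{G_j}}\|_{A^p\to L^p}<\epsilon$. Thus up to an error $\epsilon$ it suffices to estimate the essential norm of $A:=\sum_j M_{1_{F_j}}TPM_{1_{G_j}}$. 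Because $A^p$ functions vanishing near the boundary form a ``compact direction,'' I would split the index set $J$ into the finitely many $j$ with $F_j$ meeting a fixed compact $K\subset\B_n$ (these contribute a finite-rank, hence compact, operator once we also truncate, modulo a standard argument) and the remaining $j$ with $F_j\subset\B_n\setminus K$; only the latter sum $A_K:=\sum_{F_j\cap K=\emptyset} M_{1_{F_j}}TPM_{1_{G_j}}$ affects the essential norm, and $\|T\|_{\mathrm e}\le \|A_K\|_{A^p\to L^p}+\epsilon$.

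Next I would estimate $\|A_K\|$ by a Schur-type / almost-orthogonality argument exploiting that the pieces $M_{1_{F_j}}TPM_{1_{G_j}}$ have ``overlap at most $N$'' in both the range variable (the $F_j$ are disjoint) and the domain variable (the $G_j$ overlap at most $N$ times by Proposition~\ref{Covering_Bergman}(1)). Concretely, for $f\in A^p$ with $\|f\|_{A^p}\le 1$, using the reproducing formula~\eqref{BergResOfId} and the bounded overlap, $\|A_Kf\|_{L^p}^p\lesssim_N \sum_{F_j\cap K=\emptyset}\|M_{1_{F_j}}TPM_{1_{G_j}}f\|_{L^p}^p$, and each term is bounded by a Schur estimate against the kernel $1_{F_j}(z)1_{G_j}(w)|\langle T^*K_z,K_w\rangle_{A^2}|$ with the same weight $h(w)=\|K_w\|_{A^2}^{2\delta/(pp'(n+1))}$ used in Proposition~\ref{MainEst1}. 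The crucial point is that when $z\in F_j$ and $w\in G_j$ we have $\beta(z,w)\le 3r$ (say), so the kernel entry is $|\langle Tk_z^{(p)},k_w^{(p')}\rangle_{A^2}|$ times the $\|K_z\|,\|K_w\|$ weights, and on $G_j$ with $F_j\cap K=\emptyset$ the point $z$ is forced close to the boundary; hence each such entry is at most $\sup_{|z|\ge \rho(K)}\sup_{w\in D(z,3r)}|\langle Tk_z^{(p)},k_w^{(p')}\rangle_{A^2}|$, call it $M_K$. Running the two Schur integrals (one in $w$, one in $z$), each of which is a local Rudin–Forelli integral that is \emph{finite and uniformly bounded} by~\eqref{rf.n}, pulls out a factor $M_K$ and a constant $C=C(a,N)$, giving $\|A_K\|_{A^p\to L^p}\le C\,M_K$.

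Finally, since $\rho(K)\to 1$ as $K$ exhausts $\B_n$, we get $\|T\|_{\mathrm e}\le C\limsup_{|z|\to 1}\sup_{w\in D(z,3r)}|\langle Tk_z^{(p)},k_w^{(p')}\rangle_{A^2}|+\epsilon$, and letting $\epsilon\to0$ (absorbing the change $r\mapsto 3r$ into the statement's $r$) finishes the proof. The main obstacle I anticipate is the bounded-overlap/almost-orthogonality step that lets one pass from $\|A_Kf\|_{L^p}$ to an $\ell^p$-sum of the pieces without losing more than a dimensional constant: for $p=2$ this is genuine orthogonality of the ranges $L^2(F_j)$, but for $p\neq 2$ one must argue more carefully — e.g. via the finite overlap of the $F_j$ (disjointness, in fact) on the range side combined with a vector-valued Schur/interpolation estimate handling the $G_j$ overlap on the domain side — and make sure the constant depends only on $N$ and $a$, not on $r$ or the number of pieces. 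A secondary technical point is verifying that the ``compact part'' (finitely many $j$ with $F_j$ meeting $K$) really gives a compact operator on $A^p$; this follows because $M_{1_{F_j}}TP$ maps into $L^p(F_j)$ with $F_j$ relatively compact in $\B_n$, on which $A^p\hookrightarrow L^p(F_j)$ is compact, but it must be checked that the (possibly infinite but boundedly-overlapping) sum over such $j$ is still compact, which again uses the finite overlap together with the fact that only finitely many $F_j$ meet any fixed compact set.
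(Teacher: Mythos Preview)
Your overall architecture matches the paper's: apply Proposition~\ref{MainEst1} to discard the off-diagonal part, drop a compact initial segment of the diagonal sum, and bound the remaining tail by $\sup_{w\in D(z,3r)}|\langle Tk_z^{(p)},k_w^{(p')}\rangle|$. The paper carries out the last step slightly differently --- it writes $g_j=PM_{1_{G_j}}f_j/\|M_{1_{G_j}}f_j\|$ explicitly as $\int_{G_j}\tilde a_j(w)\,k_w^{(p)}\,d\lambda(w)$ and applies H\"older directly rather than running a Schur test --- but this is essentially your Schur estimate in other clothing.

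There is, however, a genuine gap in your endgame. When you write $\|T\|_{\mathrm e}\le C\limsup_{|z|\to1}\sup_{w\in D(z,3r)}|\langle Tk_z^{(p)},k_w^{(p')}\rangle|+\epsilon$ and then say ``letting $\epsilon\to0$ finishes the proof,'' you are ignoring that both $r=r(\epsilon)$ and $C=C(r)$ depend on $\epsilon$. After pulling out $M_K$, your Schur integral over $G_j\subset D(z,3r)$ reduces (using $\|K_w\|\approx_r\|K_z\|$ on that ball) to a constant comparable to $\lambda(D(0,3r))$, which blows up as $r\to\infty$; hence $C(r(\epsilon))\to\infty$ as $\epsilon\to0$ and the limit yields nothing. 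The paper avoids this by never sending $\epsilon\to0$: it fixes the error in Proposition~\ref{MainEst1} to be $\tfrac12\|TP\|_{\mathrm e}$ (and absorbs a further $\tfrac14\|TP\|_{\mathrm e}$ coming from the choice of a near-maximizing sequence $f_j$), obtaining $\|TP\|_{\mathrm e}\le\tfrac34\|TP\|_{\mathrm e}+C(r)\limsup$, which rearranges to the desired bound for that single fixed $r$. You should use the same device.

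Finally, your anticipated ``main obstacle'' is not one. Since the $F_j$ are pairwise \emph{disjoint}, $\|A_Kf\|_{L^p}^p=\sum_j\|M_{1_{F_j}}TPM_{1_{G_j}}f\|_{L^p}^p$ is an \emph{equality} for every $1<p<\infty$, and after bounding each summand by $(CM_K)^p\|M_{1_{G_j}}f\|_{L^p}^p$ you simply sum using $\sum_j\|M_{1_{G_j}}f\|_{L^p}^p\le N\|f\|_{L^p}^p$ from the bounded overlap of the $G_j$. No interpolation or vector-valued machinery is required.
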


\begin{proof}Since $P : L^p(\B_n, dv) \rightarrow A^p$ is a bounded projection, it is enough to estimate the essential norm of $T = TP$
as an operator on from  $A^p$ to $L^p(\B_n, dv)$.

Clearly if $\|TP\|_\text{e} = 0$ then there is nothing to prove, so assume that $\|TP\|_\text{e} > 0.$ By Proposition~\ref{MainEst1} there exists $r>0$ such that for the covering $\FF_r=\{F_j\}$ associated
to $r$ (from Proposition~\ref{Covering_Bergman}) \begin{eqnarray*} \norm{TP- \sum_{j}M_{1_{F_j} }TPM_{1_{G_j} }}_{A^p\to
L^p(\B_n, dv)} < \frac{1}{2} \|TP\|_\text{e}. \end{eqnarray*}

Since $\sum_{j< m}M_{1_{F_j} }TPM_{1_{G_j} }$ is compact for every $m\in \N$ we have that $\|TP\|_{\text{e}}$ (as an operator from $A^p$ to $L^p(\B_n, dv)$) can be estimated in the following way:

\begin{align*} \norm{TP}_\text{e} &\leq  \norm{TP- \sum_{j < m}M_{1_{F_j} }TPM_{1_{G_j} }}_{A^p\to L^p(\B_n, dv)}\\ &\leq
\norm{TP- \sum_{j}M_{1_{F_j} }TPM_{1_{G_j} }}_{A^p\to L^p(\B_n, dv)}+\norm{T_m}_{A^p \rightarrow L^p(\B_n, dv)} \\ & \leq  \frac{1}{2} \|TP\|_\text{e} + \norm{T_m}_{A^p \rightarrow L^p(\B_n, dv)}, \end{align*} where
 \begin{equation*} T_m =  \sum_{j\geq m}M_{1_{F_j} }TPM_{1_{G_j} }. \end{equation*}  We will complete the proof by showing that there exists $C > 0$ where  \begin{equation*} \limsup_{m\to\infty}\norm{T_m}_{A^p\to
L^p(\B_n, dv)}\lesssim C \limsup_{|z| \rightarrow 1^{-}} \sup_{w \in D(z, r)} |\langle Tk_z ^{(p)}, k_w ^{(p')} \rangle_{A^2}| + \frac{1}{4}\norm{TP}_\text{e}. \end{equation*}

If $f\in A^p$ is arbitrary of norm no greater than $1$, then

\begin{align*} \norm{T_m f}_{A^p}^p &= \sum_{j\geq m}\norm{M_{1_{F_j} }TPM_{1_{G_j} }f}_{A^p}^p\\ &= \sum_{j\geq m}
\frac{\norm{M_{1_{F_j} }TPM_{1_{G_j} }f}_{A^p}^p}{\norm{M_{1_{G_j} }f}_{A^p}^p}\norm{M_{1_{G_j} }f}_{A^p}^p \leq
N\sup_{j\geq m}\norm{M_{1_{F_j} }Tl_j}_{A^p}^p \end{align*} where
$$l_j:=\frac{P M_{1_{G_j} }f}{\norm{M_{1_{G_j} }f}_{A^p}}.$$

Therefore, we have that

$$\norm{T_m}_{A^p\to L^p(\B_n, dv)}\lesssim \sup_{j\geq m}\sup_{\norm{f}_{A^p} \leq 1}\left\{\norm{M_{1_{F_j} } Tl_j}_{A^p}:
l_j=\frac{PM_{1_{G_j} }f}{\norm{M_{1_{G_j} }f}_{A^p}}\right\}$$ and hence

$$\limsup_{m\to \infty} \norm{T_m}_{A^p\to L^p(\B_n, dv)}\lesssim \limsup_{j\to\infty}\sup_{\norm{f}_{A^p}\leq
1}\left\{\norm{M_{1_{F_j} } Tl_j}_{A^p}: l_j=\frac{PM_{1_{G_j} }f}{\norm{M_{1_{G_j} }f}_{A^p}}\right\}.$$

Now pick a sequence $\{f_j\}$ in $A^p$ with $\norm{f_j}_{A^p}\leq 1$ such that
$$ \limsup_{j\to \infty}\sup_{\norm{f}\leq 1}\left\{\norm{M_{1_{F_j} } Tg}_{A^p}: g=\frac{PM_{1_{G_j}}f}{\norm{M_{1_{G_j}
}f}_{A^p}}\right\}-\frac{1}{4} \|TP\|_{\text{e}} \leq \limsup_{j\to\infty}\norm{M_{1_{F_j} } Tg_j}_{A^p},$$ where $$g_j=\frac{P M_{1_{G_j}
}f_j}{\norm{M_{1_{G_j}
}f_j}_{A^p}}= \frac{\int_{G_j} \ip{f}{ k_w ^{(p')}}_{A^2}  k_{w} ^{(p)} \,d\lambda(w)}{\left(\int_{G_j}\abs{\ip{f}{ k_u ^{(p')}  }_{A^2}}  ^p  \, d\lambda (u)\right)^{\frac{1}{p}}} = \int_{G_j} \widetilde{a}_j (w) \, k_w ^{(p)} \, d\lambda(w) $$ where \begin{equation*} \widetilde{a}_j (w) = \frac{\ip{f}{ k_w ^{(p')}}_{A^2}  }{\left(\int_{G_j}\abs{\ip{f}{ k_u ^{(p')}  }_{A^2}}  ^p  \, d\lambda (u)\right)^{\frac{1}{p}}}. \end{equation*}

Finally, by the reproducing property and H\"{o}lder's inequality, we have that \begin{align*} \limsup_{j \rightarrow \infty} \norm{M_{1_{F_j} } T g_j}_{A^p} ^p & \leq  \limsup_{j \rightarrow \infty} \int_{F_j} \left( \int_{G_j} \abs{\widetilde{a}_j (w) } \abs{Tk_w ^{(p)} (z)} \, d\lambda(w) \right)^p \, dv(z) \\ & = \limsup_{j \rightarrow \infty} \int_{F_j} \left( \int_{G_j} \abs{\widetilde{a}_j (w) } \abs{\ip{Tk_w ^{(p)}}{k_z ^{(p')}}_{A^2}} \, d\lambda(w) \right)^p \, d\lambda(z) \\ & \leq \limsup_{|z| \rightarrow 1^{-} } \sup_{w \in D(z, 3r)} \abs{\ip{Tk_z ^{(p)}}{ k_w ^{(p')}}_{A^2}}^p \left( \sup_j \lambda(G_j) ^p    \int_{G_j} \abs{\widetilde{a}_j (w) } ^p  \, d\lambda(w) \right) \\ & \leq C(r) \limsup_{|z| \rightarrow 1^{-} } \sup_{w \in D(z, 3r)} \abs{\ip{Tk_z ^{(p)}}{ k_w ^{(p')}}_{A^2}}^p \end{align*} since by Proposition \ref{Covering_Bergman} we have that $z \in F_j$ and $w \in G_j$ implies that $d(z, w) \leq 3r$ and $\lambda(G_j) \leq C(r)$ where $C(r)$ is independent of $j$.

 \end{proof}

We will finish this section off with a proof of Theorem \ref{local-Bergman1}.  First, for $z\in\B_n$,
define \begin{equation*}  U_z ^{(p)} f(w):= f(\varphi_z(w)) (k_z (w))^\frac{2}{p} \end{equation*} which via a simple change of variables argument is clearly an isometry on $A^p$.  As was shown in \cite{Sua}, an easy computation tells us that there exists a unimodular function $\Phi(\cdot, \cdot)$ on $\B_n \times \B_n$ where \begin{equation} \label{TransOpForm} (U_z ^{(p)})^* k_w ^{(p')}  = \Phi(z, w) k_{\phi_z (w)} ^{(p')}. \end{equation}

With the help of the operators $U_z ^{(p)}$, we will prove the following general result which in conjunction with Theorem \ref{essBerg} proves Theorem \ref{local-Bergman1}. Note that proof is similar to the proof of  \cite{I}*{Proposition 1.4}.

\begin{prop} \label{BerVanProp}  If $T$ is any bounded operator on $A^p$ for $1 < p < \infty$ then the following are equivalent \begin{itemize}{}{}
\item [(a)] $\lim_{|z| \rightarrow 1^{-}} \sup_{w \in D(z, r)} |\langle Tk_z ^{(p)} , k_w ^{(p')} \rangle_{A^2}|  = 0$ for all $r > 0$,
\item [(b)] $\lim_{|z| \rightarrow 1^{-}} \sup_{w \in D(z, r)} |\langle Tk_z ^{(p)} , k_w ^{(p')} \rangle_{A^2}|  = 0$ for some $r > 0$,
\item [(c)] $\lim_{|z| \rightarrow 1^{-}}  \abs{\ip{Tk_z}{k_z}_{A^2}} = 0 $. \end{itemize}
\end{prop}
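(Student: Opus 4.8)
The plan is to establish the cycle $(a) \Rightarrow (b) \Rightarrow (c) \Rightarrow (a)$, where the implications $(a) \Rightarrow (b)$ is trivial and $(c)\Rightarrow (a)$ is the substantive one that exploits the invariance built into the operators $U_z^{(p)}$. For $(b)\Rightarrow(c)$, I would simply specialize: take $r>0$ from (b), note that $z\in D(z,r)$ always, and observe that $\langle Tk_z^{(p)},k_z^{(p')}\rangle_{A^2} = \langle Tk_z,k_z\rangle_{A^2}$ since a direct computation gives $k_z^{(p)}(\cdot)\overline{k_z^{(p')}(\cdot)}$ pairing equal to the $p=2$ pairing (the norm factors $\|K_z\|^{-2/p'}$ and $\|K_z\|^{-2/p}$ multiply to $\|K_z\|^{-2} = \|K_z\|^{-1}\cdot\|K_z\|^{-1}$, reproducing $k_z\otimes \overline{k_z}$); hence taking $w=z$ inside the supremum in (b) forces (c).

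The heart of the argument is $(c)\Rightarrow(a)$. First I would rewrite the quantity in (a) using the transvection operators: for $z,w\in\B_n$ with $w = \varphi_z(\zeta)$ (so $\zeta = \varphi_z(w)$ ranges over $D(0,r)$ as $w$ ranges over $D(z,r)$), use \eqref{TransOpForm} to write
$$
\langle Tk_z^{(p)}, k_w^{(p')}\rangle_{A^2} = \langle T k_z^{(p)}, U_z^{(p)}(U_z^{(p)})^* k_w^{(p')}\rangle = \overline{\Phi(z,w)}\,\langle (U_z^{(p)})^* T k_z^{(p)}, k_{\varphi_z(w)}^{(p')}\rangle_{A^2},
$$
and similarly push $U_z^{(p)}$ onto $k_z^{(p)}$, noting that $U_z^{(p)}$ maps $k_z^{(p)}$ to a unimodular constant times $k_0^{(p)}$ (the constant function, up to normalization). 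After absorbing unimodular factors, (a) becomes the statement that $\|(U_z^{(p)})^* T U_z^{(p)} k_0^{(p)}\|$, tested against $k_\zeta^{(p')}$ for $\zeta\in D(0,r)$, tends to $0$ as $|z|\to 1^-$. Since $\{k_\zeta^{(p')}:\zeta\in D(0,r)\}$ is a bounded family and, by the subharmonicity/pointwise-estimate machinery for $A^p$ functions on $\B_n$, controlling $\langle g, k_\zeta^{(p')}\rangle$ uniformly over a hyperbolic ball $D(0,r)$ is equivalent (up to constants depending on $r$) to controlling $\langle g, k_0^{(p')}\rangle$ together with a normal-families argument — here (c), reformulated as $\langle (U_z^{(p)})^* T U_z^{(p)} k_0^{(p)}, k_0^{(p')}\rangle \to 0$, is precisely the $\zeta = 0$ case. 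To upgrade from $\zeta=0$ to uniformity over $D(0,r)$, I would argue by contradiction: if (a) failed, pick $z_n$ with $|z_n|\to 1$ and $\zeta_n\in D(0,r)$ with $|\langle (U_{z_n}^{(p)})^* T U_{z_n}^{(p)} k_0^{(p)}, k_{\zeta_n}^{(p')}\rangle|\geq \epsilon$; pass to a subsequence with $\zeta_n\to\zeta_0\in \overline{D(0,r)}$, and use the fact that $\zeta\mapsto k_\zeta^{(p')}$ is norm-continuous on $\B_n$ to reduce to testing against the single kernel $k_{\zeta_0}^{(p')}$; then applying (c) at the shifted points $\varphi_{z_n}(\zeta_0)$ (which still approach the boundary) yields the contradiction.

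The main obstacle I anticipate is the bookkeeping in the second step: verifying that conjugation by $U_z^{(p)}$ converts the pair $(k_z^{(p)}, k_w^{(p')})$ into $(c\cdot k_0^{(p)}, c'\cdot k_{\varphi_z(w)}^{(p')})$ with $|c|=|c'|=1$, and in particular checking that the $p$- and $p'$-normalizations are compatible with the formula \eqref{TransOpForm} (this is exactly the content of Su\'arez's computation cited around \eqref{TransOpForm}, so I would lean on that). The contradiction/normal-families step is routine once the reformulation is in place, since everything takes place among uniformly bounded reproducing kernels and the boundary behavior is governed entirely by (c) evaluated along a boundary-approaching sequence. I would also remark that this is the $p\neq 2$ analogue of \cite{I}*{Proposition 1.4}, so the structure of that argument can be followed closely.
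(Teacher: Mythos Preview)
Your scaffolding for $(a)\Rightarrow(b)\Rightarrow(c)$ and the reformulation via $U_z^{(p)}$ are fine and match the paper. The gap is in the final step of $(c)\Rightarrow(a)$. After reducing to a fixed $\zeta_0\in\overline{D(0,r)}$, what you have is a lower bound on the \emph{off-diagonal} quantity
\[
\bigl|\bigl\langle (U_{z_n}^{(p)})^{\!*}\, T\, U_{z_n}^{(p)}\, k_0^{(p)},\; k_{\zeta_0}^{(p')}\bigr\rangle\bigr|\ \ge\ \epsilon,
\]
i.e.\ essentially $|\langle T k_{z_n}^{(p)}, k_{\varphi_{z_n}(\zeta_0)}^{(p')}\rangle|\ge\epsilon$. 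Applying $(c)$ at the shifted points $\varphi_{z_n}(\zeta_0)$ only gives the \emph{diagonal} information $\langle T k_{\varphi_{z_n}(\zeta_0)}, k_{\varphi_{z_n}(\zeta_0)}\rangle\to 0$, which after conjugation says $\langle T_n k_{\zeta_0}, k_{\zeta_0}\rangle\to 0$ where $T_n:=(U_{z_n}^{(p)})^{\!*} T U_{z_n}^{(p)}$. There is no general mechanism to pass from vanishing of $\langle T_n k_{\zeta_0}, k_{\zeta_0}\rangle$ (or even of $\langle T_n k_\zeta, k_\zeta\rangle$ for each fixed $\zeta$) to vanishing of the off-diagonal entry $\langle T_n k_0, k_{\zeta_0}\rangle$; the $T_n$ are a \emph{varying} family of uniformly bounded operators, and pointwise Berezin information along the diagonal does not control single off-diagonal matrix coefficients.

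The paper closes this gap with one extra idea you are missing: pass to a further subsequence so that $T_n$ converges in the weak operator topology to some $\widehat{T}$ (possible since the unit ball of $\mathcal{B}(A^p)$ is WOT-compact). Then $(c)$, applied at $\varphi_{z_n}(\zeta)$ for \emph{every} $\zeta\in\B_n$, shows that the Berezin transform of $\widehat{T}$ vanishes identically on $\B_n$, and now the injectivity of the Berezin transform forces $\widehat{T}=0$; this kills the off-diagonal limit $\langle \widehat{T} k_0, k_{\zeta_0}\rangle$ and gives the contradiction. In short, your ``routine normal-families step'' is not routine: it needs WOT compactness together with injectivity of the Berezin transform, and that is precisely what you should add.
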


\begin{proof} Trivially we have that $(a) \Rightarrow (b)$, and the fact that $(b) \Rightarrow (c)$ follows by definition and setting $z = w$.  We will complete the proof by showing that $(c) \Rightarrow (a)$.

 Assume to the contrary that $\abs{\ip{Tk_z}{k_z}_{A^2}}$ vanishes as $|z| \rightarrow 1^{-}$ but that \begin{equation*}  \limsup_{|z| \rightarrow 1^{-}} \sup_{w \in D(z, r)} \abs{\ip{Tk_z ^{(p)}}{ k_w ^{(p')}}_{A^2}} \neq 0 \end{equation*} for some fixed $r > 0$.  Thus, there exists sequences $\{z_m\}, \{w_m\} $ and some $0 < r_0 < 1$ where $\lim_{m \rightarrow \infty} |z_m| = 1$ and $|w_m| \leq r_0$ for any $m \in \N$, and where \begin{equation} \label{BerPropAssump} \limsup_{m \rightarrow \infty} \abs{\ip{ Tk_{z_m} ^{(p)}}{ k_{\varphi_{z_m} (w_m)} ^{(p')}}_{A^2}} > \epsilon\end{equation} for some $\epsilon > 0$.  Furthermore, passing to a subsequence if necessary, we may assume that $\lim_{m \rightarrow \infty} w_m = w \in \B_n$. Note that since $|w_m| \leq r_0 < 1$ for all $m$, we trivially have $\lim_{m \rightarrow \infty} k_{w_m} ^{(p')} =  k_w ^{(p')} $ where the convergence is in the $A^{p'}$ norm.

Let $\mathcal{B}(A^p)$ be the space of bounded operators on $A^p$.  Since the unit ball  in $\mathcal{B}(A^p)$ is $\WOT$ compact,  we can (passing to another subsequence if necessary) assume that \begin{equation*} \widehat{T} = \WOT -  \lim_{m \rightarrow \infty} U_{z_m} ^{(p)} T (U_{z_m} ^{(p')})^*. \end{equation*} Thus, we have that \begin{align*} \limsup_{m \rightarrow \infty} \abs{\ip{ Tk_{z_m}  ^{(p)}}{ k_{\varphi_{z_m} (w_m)}  ^{(p')}}_{A^2}} & =
\limsup_{m \rightarrow \infty} \abs{\ip{ U_{z_m} ^{(p)} T (U_{z_m} ^{(p')})^*  k_{0} ^{(p)} }{  k_{ w_m} ^{(p')}}_{A^2}} \\ & = \limsup_{m \rightarrow \infty} \abs{\ip{ U_{z_m} ^{(p)} T (U_{z_m} ^{(p')})^*  k_{0} ^{(p)} }{  k_{ w} ^{(p')}}_{A^2}} \\ & = \abs{\ip{\widehat{T} k_0}{ k_w}_{A^2}} .\end{align*}    However, for any $z \in \B_n$ \begin{equation*} \abs{\ip{\widehat{T} k_z ^{(p)}}{ k_z ^{(p')}}}  = \lim_{m \rightarrow \infty}  \abs{\ip{U_{z_m} ^{(p)} T (U_{z_m} ^{(p')})^* k_z ^{(p)}}{ k_z ^{(p')}} } \approx \lim_{m \rightarrow \infty} \abs{\ip{T k_{\varphi_{z_m} (z) } ^{(p)}}{ k_{\varphi_{z_m}(z)}  ^{(p')} }_{A^2}} = 0 \end{equation*} since by assumption $\abs{\ip{Tk_z}{k_z}}$ vanishes as $|z| \rightarrow {1^{-}}$. Thus, since the Berezin transform is injective on $A^p$, we get that $\widehat{T} = 0$, which contradicts (\ref{BerPropAssump}) and completes the proof. \end{proof}

\section{Generalized Bargmann-Fock space case} \label{Fock}

In this section we will prove Theorems \ref{local-Fock} and \ref{local-ordinaryFock}. Some parts of the proofs are essentially identical to proof of Theorem \ref{local-Bergman1} and so we will we only outline the necessary modifications.

For this section, let $$ D(z,r):=\left\{w\in\C^n:\abs{w-z}<r\right\} $$ denote the standard Euclidean disc centered at
the point $z$ of radius $r>0$.  For $z\in\C^n$, we define $$ U_z f(w):= f(z-w) k_z(w), $$ which via a simple change of
variables argument is clearly an isometry on $\Fp$ (though note in general that it is not clear whether $U_z$ even maps $\Fpphi$ into itself).     Recall also that the orthogonal projection of $L^2(\C^n,
e^{-2\phi} dv)$ onto $\Ftwophi$ is given by the integral operator $$
P(f)(z):=\int_{\C^n}\ip{\Kbw}{\Kbz}_{\Ftwophi}f(w)\,e^{-2\phi(w)} dv. $$ Therefore, for all $f\in \Fpphi$ we have \begin{equation} \label{FockResOfId}  f(z)=\int_{\C^n}\ip{f}{\widetilde{\kbw}}_{\Ftwophi}\widetilde{\kbw}(z)\,dv(w)\end{equation} where $\widetilde{\kbw}(z) := \Kbw(z) e^{-\phi(w)}$.  Note that $|K(z, z)| \approx e^{2\phi(z)}$ (see \cite{SV}) so that \begin{equation} \label{EquivOfNormRepKer} |\widetilde{\kbw}(z)| \approx |\kbw(z)|. \end{equation}

The following analog of Lemma~\ref{lm-rfloc} is simpler to prove in this case.

\begin{lm} \begin{equation}\label{rfloc-Fock} \lim_{R\to\infty}\sup_{z\in\mathbb{C}^n}\int_{D(z,R)^c}
\abs{\ip{k_z}{k_w}_{\Ftwophi}}\,dv(w)= 0. \end{equation} \end{lm}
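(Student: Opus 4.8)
The plan is to reduce the statement to the elementary integrability of $e^{-\epsilon|u|}$ over $\C^n$, by invoking the standard off-diagonal decay of the Bergman kernel $K(z,w)$ of $\Ftwophi$. The hypothesis $c\,\omega_0 < d d^c\phi < C\,\omega_0$ is precisely what makes this decay available, measured in Euclidean distance.

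First I would recall the pointwise kernel estimate: there exist constants $C_1,\epsilon>0$, depending only on $\phi$, such that
$$|K(z,w)| \le C_1\, e^{\phi(z)+\phi(w)}\, e^{-\epsilon|z-w|} \qquad (z,w\in\C^n).$$
Such an estimate is contained in (or readily deduced from) the analysis of generalized Bargmann--Fock spaces in \cite{SV}; it reflects the fact that the Riemannian distance induced by $d d^c\phi$ is comparable to the Euclidean distance when $d d^c\phi$ is pinched between two multiples of $\omega_0$. Combining it with the diagonal estimate $|K(z,z)| \approx e^{2\phi(z)}$ recorded above yields
$$\bigl|\ip{k_z}{k_w}_{\Ftwophi}\bigr| = \frac{|K(z,w)|}{|K(z,z)|^{1/2}\,|K(w,w)|^{1/2}} \lesssim e^{-\epsilon|z-w|},$$
uniformly in $z,w\in\C^n$ (after possibly shrinking $\epsilon$).

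Next I would simply integrate. For every $R>0$ and every $z\in\C^n$, the translation $u=w-z$ gives
$$\int_{D(z,R)^c}\bigl|\ip{k_z}{k_w}_{\Ftwophi}\bigr|\,dv(w) \lesssim \int_{D(z,R)^c} e^{-\epsilon|z-w|}\,dv(w) = \int_{|u|>R} e^{-\epsilon|u|}\,dv(u),$$
and the right-hand side no longer depends on $z$. Taking the supremum over $z\in\C^n$ and then letting $R\to\infty$ finishes the proof, since $\int_{\C^n} e^{-\epsilon|u|}\,dv(u)<\infty$ forces the tail over $\{|u|>R\}$ to vanish.

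The only real content is the exponential off-diagonal kernel bound; once it is in hand the rest is a one-line computation, which is why this lemma is genuinely simpler than its Bergman-space analogue Lemma~\ref{lm-rfloc}, where no such decay exists and one must instead go through the Rudin--Forelli estimates. The point to be careful about is that the constants $C_1$ and $\epsilon$ in the kernel estimate be uniform over all of $\C^n$, with no degeneration as $|z|\to\infty$; this uniformity is exactly what the two-sided bound $c\,\omega_0 < d d^c\phi < C\,\omega_0$ provides.
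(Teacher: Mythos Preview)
Your proof is correct and follows exactly the same route as the paper: invoke the exponential off-diagonal bound $\abs{\ip{k_z}{k_w}_{\Ftwophi}}\lesssim e^{-\epsilon|z-w|}$ (from \cite{SV}), translate $w\mapsto w-z$ to remove the $z$-dependence, and let $R\to\infty$. The only difference is that you spell out how the normalized-kernel bound follows from the unnormalized one and the diagonal estimate, whereas the paper simply asserts it.
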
 \noindent To prove this, simply note that there exists $\epsilon > 0$ such that $\abs{\ip{k_z}{k_w}_{\Ftwophi}} \leq e^{-\epsilon|z - w|}$ for all $z, w \in \C^n$.  The proof of this is then immediate since $$
\int_{D(z,R)^c} \abs{\ip{k_z}{k_w}_{\Ftwophi}}\,dv(w)\leq \int_{D(0,R)^{c}}  e^{-\epsilon|w|}  dv(w) $$ which clearly goes to
zero as $R\to\infty$.

As in the Bergman case, $\AaFock$ contains all Toeplitz operators with bounded symbols. Also, as was stated in the introduction, any $T \in \AaFock$ is automatically bounded on $\Fpphi$ for all $1 \leq p \leq \infty$.  To prove this, note that it is enough to prove that $T$ is bounded on $\Fonephi$ and $\Finfphi$ by complex interpolation (see \cite{I}).  To that end, we only prove that $T$ is bounded on $\Fonephi$ since the proof that $T$ is bounded on $\Finfphi$ is similar. If $T \in \AaFock$ and $f \in \Fonephi$, then the reproducing property gives us that \begin{align*} \abs{Tf(z)} e^{- \phi(z)} & \approx \abs {\ip{f}{T^*k_z}_{\Ftwophi}} \\ & \lesssim \int_{\C^n} \abs{f(u)} \abs{\ip{T^* k_z}{k_u}_{\Ftwophi}} \, e^{- \phi(u)} \, dv(u). \end{align*} Thus, by Fubini's theorem, we have that \begin{equation*} \norm{Tf}_{\Fonephi} \leq \int_{\C^n} \abs{f(u)} \left( \int_{\C^n}  \abs{\ip{T^* k_z}{k_u}_{\Ftwophi}} \,  dv(z) \right)  e^{- \phi(u)} \, dv(u) \lesssim \norm{f}_{\Fonephi}. \end{equation*}

In addition, $\AaFock$ satisfies the following two properties: \begin{prop} \label{T-Fock} Each Toeplitz operator $T_u$ on $\Fpphi$ with a bounded
symbol $u(z)$ is weakly localized. \end{prop}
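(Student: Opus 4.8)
The plan is to mirror the proof of Proposition~\ref{T-Berg} from the Bergman setting, taking advantage of the fact that in the Fock space the relevant kernel estimate is exponential decay rather than a Rudin--Forelli estimate. First I would recall the explicit action of the Toeplitz operator on a reproducing kernel: using the definition $T_u k_z = P(u k_z)$ and the reproducing property, one writes
\[
\abs{\ip{T_u k_z}{k_w}_{\Ftwophi}} \leq \norm{u}_\infty \int_{\C^n} \abs{\ip{k_w}{k_x}_{\Ftwophi}} \abs{\ip{k_x}{k_z}_{\Ftwophi}} \, dv(x),
\]
and likewise for $T_{\overline u} = (T_u)^*$. Since $\ip{T_u k_z}{k_w}_{\Ftwophi}$ and $\ip{T^*_u k_z}{k_w}_{\Ftwophi}$ obey the same type of bound, it suffices to verify both the global integrability condition~\eqref{assump1-Fock} and the uniform tail condition~\eqref{assump-Fock} for the kernel $\int_{\C^n} \abs{\ip{k_w}{k_x}_{\Ftwophi}} \abs{\ip{k_x}{k_z}_{\Ftwophi}} \, dv(x)$.

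For the global bound~\eqref{assump1-Fock}, I would apply Fubini to interchange the order of integration and use that $\sup_{x} \int_{\C^n} \abs{\ip{k_w}{k_x}_{\Ftwophi}} \, dv(w) < \infty$ (which follows immediately from the exponential decay estimate $\abs{\ip{k_z}{k_w}_{\Ftwophi}} \leq e^{-\epsilon \abs{z-w}}$ noted just before Proposition~\ref{T-Fock}), leaving $\norm{u}_\infty \sup_x \left(\int \abs{\ip{k_w}{k_x}}dv(w)\right)\left(\int \abs{\ip{k_x}{k_z}}dv(x)\right) < \infty$. For the tail condition~\eqref{assump-Fock}, I would split the $x$-integral over $D(z, r/2)$ and its complement, exactly as in Proposition~\ref{T-Berg}: on $D(z,r/2)$ one uses that $D(z,r)^c \subset D(x, r/2)^c$ to pull out a factor $\sup_z \int_{D(z,r/2)^c} \abs{\ip{k_z}{k_w}_{\Ftwophi}}\,dv(w)$, while leaving a globally integrable factor; on $D(z,r/2)^c$ one does the reverse. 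Both pieces are then controlled by $2\norm{u}_\infty C \, A(r/2)$ where $A(r) = \sup_z \int_{D(z,r)^c}\abs{\ip{k_z}{k_w}_{\Ftwophi}}\,dv(w)$, and the Fock-space uniform estimate~\eqref{rfloc-Fock} gives $A(r/2) \to 0$ as $r \to \infty$.

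There is essentially no serious obstacle here: the argument is a direct transcription of the Bergman proof with the simplification that $d\lambda$ is replaced by $dv$ (up to the harmless equivalence~\eqref{EquivOfNormRepKer}) and the Rudin--Forelli estimates~\eqref{rf.n} and~\eqref{rfloc} are replaced by the cleaner exponential-decay facts. The only minor point requiring care is bookkeeping the role of $e^{-2\phi}$ in the definition of the projection $P$ on $L^2(\C^n, e^{-2\phi}dv)$ and confirming that the pairings $\ip{\cdot}{\cdot}_{\Ftwophi}$ appearing throughout are consistent with~\eqref{FockResOfId}; using~\eqref{EquivOfNormRepKer} to pass freely between $\kbw$ and $\widetilde{\kbw}$ handles this. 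Thus the proof should be brief, and I would present it in a compressed form referring back to the proof of Proposition~\ref{T-Berg} for the details of the splitting argument.
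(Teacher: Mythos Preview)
Your proposal is correct, but the paper takes a shorter and slightly different route for the tail estimate. Both start from the same pointwise bound
\[
\abs{\ip{T_u k_z}{k_w}_{\Ftwophi}} \lesssim \|u\|_\infty \int_{\C^n} e^{-\epsilon|z-x|} e^{-\epsilon|x-w|}\,dv(x),
\]
but instead of splitting the $x$--integral over $D(z,r/2)$ and its complement as in Proposition~\ref{T-Berg}, the paper exploits the explicit exponential form directly: for $w\in D(z,r)^c$ the triangle inequality gives $|z-x|+|x-w|\geq r$, so
\[
e^{-\epsilon|z-x|}e^{-\epsilon|x-w|} \leq e^{-\epsilon r/2}\, e^{-\frac{\epsilon}{2}|z-x|}e^{-\frac{\epsilon}{2}|x-w|},
\]
and the remaining double integral over $x$ and $w$ is uniformly bounded. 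This yields the tail bound in one line with the explicit rate $e^{-\epsilon r/2}$. Your domain--splitting argument also works and has the mild advantage of being a direct transcription of the Bergman proof (so it would go through for any kernel satisfying only the abstract estimate~\eqref{rfloc-Fock}, without an explicit exponential bound); the paper's approach buys brevity and a quantitative decay rate by using the specific exponential form of the Fock kernel estimate.
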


\begin{proof} Since $\abs{\ip{k_z}{k_w}_{\Ftwophi}} \leq e^{-\epsilon|z - w|}$ for some $\epsilon > 0$ we have that \begin{align*} \abs{\ip{T_u k_z}{k_w}_{\Ftwophi}}  & \lesssim \|u\|_{L^\infty} \int_{\C^n} \abs{\ip{k_z}{k_x}_{\Ftwophi}} \abs{\ip{k_x}{k_w}_{\Ftwophi}} \, dx \\ & \lesssim \|u\|_{L^\infty} \int_{\C^n} e^{- \epsilon |z - x|} e^{-\epsilon |x - w |} \, dx. \end{align*}  Now if $|z - w| \geq r$ then by the triangle inequality we have that either $|z - x| \geq r/2$ or $|x - w| \geq r/2$ so that  \begin{equation*} \int_{D(z, r)^c} \abs{\ip{T_u {k}_z}{{k}_w}_{\Ftwophi}} \, dw  \lesssim e^{-\frac{\epsilon r}{2}} \|u\|_{L^\infty} \int_{D(z, r)^c} \int_{\C^n} e^{- \frac{\epsilon}{2}  |z - x|} e^{-\frac{\epsilon}{2} |x - w |} \, dx \,dw \lesssim  e^{-\frac{\epsilon r}{2}} \|u\|_{L^\infty}  \end{equation*} \end{proof}

Note that $T_u$ is sufficiently localized even in the sense of Xia and
Zheng by \cite{XZ}*{Proposition 4.1}.  Also note that a slight variation of the above argument shows that the Toeplitz operator $T_\mu \in \AaFock$ if $\mu$ is a positive Fock-Carleson measure on $\C^n$ (see \cite{SV} for precise definitions).

\begin{prop} \label{C*-Fock} $\AaFock$ forms a $*$-algebra.
\end{prop}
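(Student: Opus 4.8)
The plan is to mirror the proof of Proposition~\ref{C*-Berg}, but in the (much cleaner) Fock-space setting. First I would verify the easy parts: if $T \in \AaFock$ then $T^* \in \AaFock$ is immediate from the symmetric form of the defining conditions~\eqref{assump1-Fock} and~\eqref{assump-Fock}, and linear combinations of operators in $\AaFock$ are clearly again in $\AaFock$ since each of the four defining quantities is subadditive. So the only real content is closure under composition: given $S, T \in \AaFock$, show $TS \in \AaFock$.

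For the composition, I would write, using the reproducing formula~\eqref{FockResOfId} (in the form of a resolution of the identity for $k_x$, which is valid since $|\widetilde{k_w}| \approx |k_w|$ by~\eqref{EquivOfNormRepKer}),
\begin{equation*}
\ip{TS k_z}{k_w}_{\Ftwophi} = \ip{S k_z}{T^* k_w}_{\Ftwophi} = \int_{\C^n} \ip{S k_z}{k_x}_{\Ftwophi} \ip{k_x}{T^* k_w}_{\Ftwophi} \, dv(x),
\end{equation*}
so that
\begin{equation*}
\abs{\ip{TS k_z}{k_w}_{\Ftwophi}} \leq \int_{\C^n} \abs{\ip{S k_z}{k_x}_{\Ftwophi}} \, \abs{\ip{T^* k_w}{k_x}_{\Ftwophi}} \, dv(x).
\end{equation*}
Integrating over $w \in D(z,r)^c$ and using Fubini, I would split the $x$-integral as $\int_{D(z, r/2)} + \int_{D(z, r/2)^c}$. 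On $D(z, r/2)$ the triangle inequality gives $D(z,r)^c \subset D(x, r/2)^c$, so the inner $w$-integral is controlled by $\sup_x \int_{D(x, r/2)^c} \abs{\ip{T^* k_w}{k_x}_{\Ftwophi}} \, dv(w)$, which tends to $0$ as $r \to \infty$ by~\eqref{assump-Fock} for $T$; the remaining $\int_{D(z,r/2)} \abs{\ip{S k_z}{k_x}_{\Ftwophi}}\,dv(x)$ is bounded by~\eqref{assump1-Fock} for $S$. On $D(z, r/2)^c$ the outer factor $\int_{\C^n} \abs{\ip{T^* k_w}{k_x}_{\Ftwophi}}\,dv(w)$ is uniformly bounded by~\eqref{assump1-Fock} for $T$, while $\int_{D(z, r/2)^c} \abs{\ip{S k_z}{k_x}_{\Ftwophi}}\,dv(x) \to 0$ by~\eqref{assump-Fock} for $S$. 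Combining, $\sup_z \int_{D(z,r)^c} \abs{\ip{TS k_z}{k_w}_{\Ftwophi}}\,dv(w) \to 0$ as $r \to \infty$. The finiteness condition~\eqref{assump1-Fock} for $TS$ follows from the same computation with the cutoff removed (or simply as the $r = 0$ endpoint, bounding everything by the product of the two uniform bounds). The condition on $(TS)^* = S^* T^*$ is obtained by the identical argument with the roles of $S, T$ interchanged and $S, T$ replaced by $S^*, T^*$.

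I do not expect any serious obstacle here: unlike the Bergman case there are no weight factors $\norm{K_z}^{\bullet}/\norm{K_w}^{\bullet}$ to track, since the relevant kernel $\abs{\ip{k_z}{k_w}_{\Ftwophi}}$ already enjoys the pointwise Gaussian decay $\leq e^{-\epsilon|z-w|}$, and the measure $dv$ is translation-like enough for the splitting argument to go through verbatim. The only minor point to be careful about is justifying the use of the resolution of the identity / Fubini, i.e.\ that all integrals are absolutely convergent; this is exactly what~\eqref{assump1-Fock} guarantees, together with the elementary fact (used already in the boundedness proof above) that $\int_{\C^n} \abs{\ip{k_x}{k_w}_{\Ftwophi}}\,dv(w) < \infty$ uniformly in $x$. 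Hence $\AaFock$ is closed under adjoints, linear combinations, and products, i.e.\ it is a $*$-algebra.
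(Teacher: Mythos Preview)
Your proposal is correct and follows exactly the approach the paper indicates: mirror the Bergman-space argument of Proposition~\ref{C*-Berg} (which in turn borrows the $D(z,r/2)$--$D(z,r/2)^c$ splitting from Proposition~\ref{T-Berg}), replacing the resolution of the identity~\eqref{BergResOfId} by~\eqref{FockResOfId} together with~\eqref{EquivOfNormRepKer}. The only cosmetic point is that the resolution of the identity~\eqref{FockResOfId} literally involves $\widetilde{k}_x$ rather than $k_x$, so the displayed equality $\ip{Sk_z}{T^*k_w}=\int\ip{Sk_z}{k_x}\ip{k_x}{T^*k_w}\,dv(x)$ should strictly be stated with $\widetilde{k}_x$; but since $\widetilde{k}_x = c_x k_x$ with $|c_x|\approx 1$ by~\eqref{EquivOfNormRepKer}, the subsequent absolute-value estimate is unaffected, and you already flag this.
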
  We will omit the proof of this proposition since it is proved in exactly the same way as it is in the Bergman space case (where the only difference is that one uses (\ref{FockResOfId}) in conjunction with (\ref{EquivOfNormRepKer}) instead of (\ref{BergResOfId})).

We next prove that operators in the norm closure of $\AaFock$ can also be approximated by infinite sums
of well localized pieces. To state this property we need to recall the following proposition proved in \cite{MW}
\begin{prop} \label{Covering} There exists an integer $N>0$ such that for any $r>0$ there is a covering $\FF_r=\{F_j\}$
of $\C^n$ by disjoint Borel sets satisfying \begin{enumerate} \item[\label{Finite} \textnormal{(1)}] every point of
$\C^n$ belongs to at most $N$ of the sets $G_j:=\{z\in\mathbb{C}^n: d(z, F_j)\leq r\}$, \item[\label{Diameter} \textnormal{(2)}]
$\textnormal{diam}_d\, F_j \leq 2r$ for every $j$. \end{enumerate} \end{prop}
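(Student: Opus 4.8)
The plan is to construct the covering from a maximal separated set and then disjointify. Fix $r>0$ and, using Zorn's lemma, choose a set $\{a_j\}\subset\C^n$ that is maximal with respect to the property $|a_i-a_j|\geq r$ for all $i\neq j$; such a set is countable since $\C^n$ is separable. Maximality forces the open Euclidean balls $D(a_j,r)$ to cover $\C^n$, for otherwise a point lying at distance $\geq r$ from every $a_j$ could be adjoined to the set, contradicting maximality. Now fix an enumeration of the index set and disjointify: put $F_1=D(a_1,r)$ and, inductively, $F_j=D(a_j,r)\setminus\bigcup_{i<j}D(a_i,r)$. The $F_j$ are Borel, pairwise disjoint, cover $\C^n$, and satisfy $F_j\subseteq D(a_j,r)$, hence $\textnormal{diam}_d\,F_j\leq 2r$, which is property (2).

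For property (1), note first that $F_j\subseteq D(a_j,r)$ gives $G_j=\{z\in\C^n:d(z,F_j)\leq r\}\subseteq D(a_j,2r)$: if $|z-w|\leq r$ for some $w\in F_j$, then $|z-a_j|\leq|z-w|+|w-a_j|<2r$. Thus $z\in G_j$ implies $a_j\in D(z,2r)$, and it suffices to bound the number of centers $a_j$ lying in an arbitrary Euclidean ball of radius $2r$. Since the $a_j$ are $r$-separated, the balls $D(a_j,r/2)$ are pairwise disjoint, and whenever $a_j\in D(z,2r)$ we have $D(a_j,r/2)\subseteq D(z,5r/2)$. Comparing Lebesgue measures in $\R^{2n}$ yields that the number of such $j$ is at most $(5r/2)^{2n}/(r/2)^{2n}=5^{2n}=:N$, a constant depending only on $n$. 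Hence every point of $\C^n$ lies in at most $N$ of the sets $G_j$, which is (1).

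The argument has no substantive obstacle; the only point requiring a moment's care is that the overlap bound $N$ must be independent of $r$, which is immediate from the scale invariance of the packing estimate above (both the small balls $D(a_j,r/2)$ and the containing ball $D(z,5r/2)$ scale linearly in $r$). I would remark in passing that the same scheme, carried out with the Bergman metric $\beta$ in place of the Euclidean metric and using the comparability of the volumes of $\beta$-balls of a fixed radius (together with the fact that $\beta$-balls of bounded radius have comparable $d\lambda$-volume), reproduces Proposition~\ref{Covering_Bergman}; this is why both covering statements are attributed to \cite{MW}.
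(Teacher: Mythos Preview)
Your proof is correct and is precisely the standard maximal-separated-set plus volume-packing argument one expects here. Note, however, that the paper does not actually supply its own proof of this proposition: it is quoted from \cite{MW} (``recall the following proposition proved in \cite{MW}''), so there is no in-paper argument to compare against. Your construction is the natural one, and the essential point---that the overlap constant $N=5^{2n}$ depends only on the dimension and not on $r$---is handled cleanly by the scale invariance of the Euclidean packing estimate, exactly as you observe.
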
 We use this to prove the following
proposition, which is similar to what appears in \cite{MW}, but exploits condition \eqref{assump-Fock} (and is proved in a manner that is similar to the proof of \cite{I}*{Lemma 5.2}). Note that for the rest of this paper, $\Lpphi$ will refer to the space of measurable functions $f$ on $\C^n$ such that $f e^{- \phi} \in L^p(\C^n, dv)$.

\begin{prop}\label{MainEst2} Let $1 < p < \infty$ and let $T$ be in the norm closure of $\AaFock$. Then
for every $\epsilon > 0$ there exists $r>0$ such that for the covering $\FF_r=\{F_j\}$ (associated to $r$) from
Proposition \ref{Covering}  \begin{eqnarray*} \norm{ TP-\sum_{j}M_{1_{F_j} }TPM_{1_{G_j} }}_{\Fpphi \to
\Lpphi } < \epsilon. \end{eqnarray*} \end{prop}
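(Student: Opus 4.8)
The plan is to mirror the proof of Proposition \ref{MainEst1} in the Bergman setting, making the modifications dictated by the different geometry (Euclidean discs in place of hyperbolic ones) and the different reproducing-kernel bounds. By Proposition \ref{C*-Fock} together with Proposition \ref{Covering} and a routine approximation argument, it suffices to prove the estimate for a single $T \in \AaFock$ rather than for arbitrary $T$ in the norm closure. So fix $T \in \AaFock$, set $S = TP - \sum_j M_{1_{F_j}} TP M_{1_{G_j}}$, and given $\epsilon > 0$ use condition \eqref{assump-Fock} to choose $r$ so large that both $\sup_{z} \int_{D(z,r)^c} |\ip{Tk_z}{k_w}_{\Ftwophi}|\, dv(w) < \epsilon$ and the corresponding estimate for $T^*$ hold.

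Next I would carry out the pointwise domination of $Sf$. For $z \in \C^n$ pick $j_0$ with $z \in F_{j_0}$; then just as in the Bergman case the sum $\sum_j 1_{F_j}(z) 1_{G_j^c}(w)$ collapses, giving
\begin{equation*}
\abs{Sf(z)} e^{-\phi(z)} \lesssim \int_{G_{j_0}^c} \abs{\ip{T^* \widetilde{k}_z}{\widetilde{k}_w}_{\Ftwophi}} \abs{f(w)} e^{-\phi(w)}\, dv(w) \leq \int_{D(z,r)^c} \abs{\ip{T^* k_z}{k_w}_{\Ftwophi}} \abs{f(w)} e^{-\phi(w)}\, dv(w),
\end{equation*}
where I used \eqref{FockResOfId}, \eqref{EquivOfNormRepKer}, and the fact that $d(z, F_{j_0}) = 0$ forces $D(z,r)^c \subset G_{j_0}^c$. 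Thus $\norm{Sf}_{\Lpphi}$ is controlled by the $L^p(\C^n, dv)$ operator norm of the integral operator with kernel $1_{D(z,r)^c}(w) \abs{\ip{T^* k_z}{k_w}_{\Ftwophi}}$ acting on the function $g(w) = f(w) e^{-\phi(w)}$, and since $f \mapsto f e^{-\phi}$ is an isometry of $\Fpphi$ onto its image in $L^p$, it is enough to bound this integral operator on $L^p(\C^n, dv)$.

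Then I would apply the Schur test on $L^p(\C^n, dv)$ with the constant test function $h \equiv 1$ (which is the natural analog of $h(w) = \norm{K_w}^{2\delta/(pp'(n+1))}$ once one recalls $\norm{K_w}_{\Ftwophi} \approx e^{\phi(w)}$, so the weight factors in the Bergman proof are precisely what is absorbed by passing to the $e^{-\phi}$-weighted setting). Concretely, $\int_{D(z,r)^c} \abs{\ip{T^* k_z}{k_w}_{\Ftwophi}}\, dv(w) < \epsilon$ by the choice of $r$, and the symmetric estimate $\int_{D(z,r)^c} \abs{\ip{T^* k_z}{k_w}_{\Ftwophi}}\, dv(z) < \epsilon$ follows from the $T$-half of \eqref{assump-Fock} after observing $\ip{T^* k_z}{k_w}_{\Ftwophi} = \overline{\ip{T k_w}{k_z}_{\Ftwophi}}$; the Schur test then yields operator norm at most $\epsilon$, hence $\norm{S}_{\Fpphi \to \Lpphi} \lesssim \epsilon$, which completes the proof after relabeling $\epsilon$. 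The only mild subtlety — the step I expect to require the most care — is bookkeeping the weight $e^{-\phi}$ correctly through the identification of $\Fpphi$ with a subspace of $L^p(\C^n, dv)$ and through the reproducing formula \eqref{FockResOfId}, so that the Schur test is applied to an honest kernel on $L^p(\C^n, dv)$ with no leftover exponential factors; everything else is a direct transcription of the Bergman argument with Euclidean discs replacing hyperbolic ones.
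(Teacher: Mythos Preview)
Your argument is correct, and in fact it is a bit more streamlined than the route taken in the paper. Both proofs begin identically: reduce to $T\in\AaFock$ by approximation, carry out the pointwise domination of $Sf$ (yielding control by the integral operator with kernel $1_{D(z,r)^c}(w)\abs{\ip{T^*k_z}{k_w}_{\Ftwophi}}$), and observe that the two Schur conditions with constant test function are exactly the two halves of \eqref{assump-Fock} after using $\ip{T^*k_z}{k_w}=\overline{\ip{Tk_w}{k_z}}$ and the symmetry $w\in D(z,r)^c \Leftrightarrow z\in D(w,r)^c$.

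The difference is in the final step. You apply the $L^p$ Schur test directly (with $h\equiv 1$) and obtain the bound for all $1<p<\infty$ at once, exactly as the Bergman proof of Proposition~\ref{MainEst1} does. The paper instead runs the Schur test only for $p=2$ (on $\Ltwophi$ with $h(z)=e^{\phi(z)/2}$, which is equivalent to your constant $h$ after stripping the weight), and then treats $p\neq 2$ by complex interpolation: for $1<p<2$ it interpolates the small $p=2$ bound against a uniform-in-$r$ bound $\norm{\sum_j M_{1_{F_j}}TPM_{1_{G_j}}}_{\Fonephi\to\Lonephi}<\infty$ coming from boundedness of $T$ on $\Fonephi$, and for $2<p<\infty$ it dualizes. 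Your direct Schur argument avoids this detour entirely and keeps the Fock proof perfectly parallel to the Bergman one; the interpolation route, on the other hand, highlights that the $\Fonephi$ and $\Finfphi$ boundedness of weakly localized operators already suffices to propagate the $p=2$ estimate. Either way the conclusion is the same.
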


\begin{proof} Again by an easy approximation argument we can assume that $T \in \AaFock$. Furthermore, we first prove the theorem for $p = 2$.

 Define $$ S=TP-\sum_{j}M_{1_{F_j} }TPM_{1_{G_j} }. $$   Given $\epsilon$ choose
$r$ large enough so that \begin{equation*} \sup_{z\in\C^n}\int_{D(z,r)^c} \abs{\ip{T^*k_z}{k_w}_{\Ftwophi}}
dv(w)<\epsilon  \quad\textnormal{ and }\quad \sup_{z\in\C^n}\int_{D(z,r)^c} \abs{ \ip{Tk_z}{k_w}_{\Ftwophi}}
dv(w)<\epsilon. \end{equation*} Now for any $z\in\C^n$, pick $j_0$ such that $z \in F_{j_0}$.  Then we have that \begin{eqnarray*} \abs{Sf(z)} & \leq &
\int_{\C^n}\sum_{j}1_{F_j} (z)1_{G_j^c}(w) \abs{ \ip{T^*\Kbz}{\Kbw}_{\Ftwophi }}\abs{f(w)} e^{-2\phi(w)} \, dv(w)\\
 & = & \int_{G_{j_0} ^c} \abs{ \ip{T^*\Kbz}{\Kbw}_{\Ftwophi} }\abs{f(w)} e^{-2\phi(w)} \, dv(w) \\
 & \leq & \int_{D(z,r)^c} \abs{ \ip{T^*\Kbz}{\Kbw}_{\Ftwophi} }\abs{f(w)} e^{-2\phi(w)}\, dv(w).
\end{eqnarray*}

To finish the proof when $p = 2$, we will estimate the operator norm of the integral operator on $\Ltwophi$ with kernel $1_{D(z, r)^c} (w) \abs{ \ip{T^*\Kbz}{\Kbw}_{\Ftwophi} }$ using the classical Schur test. To that end, let $h(z) = e^{\frac{1}{2} \phi(z)}$ so that
\begin{equation*} \int_{\C^n} 1_{D(z, r)^c} (w) \abs{ \ip{T^*\Kbz}{\Kbw}_{\Ftwophi} }h(w)^2 e^{-2\phi(w)} \, dv(w) \approx h(z)^2  \int_{D(z, r)^c} \abs{\ip{T^* k_z}{k_w}_{\Ftwophi} } \, dv(w) \lesssim \epsilon h(z) ^2. \end{equation*}    Similarly, we have that \begin{equation*} \int_{\C^n} 1_{D(z, r)^c} (w) \abs{ \ip{T^*\Kbz}{\Kbw}_{\Ftwophi }}h(z)^2 e^{-2\phi(z)} \, dv(z)  \lesssim \epsilon h(w) ^2 \end{equation*} which finishes the proof when $p = 2$.

Now assume that $1 < p < 2$. Since $T$ is bounded on $\Fonephi$, we easily get that \begin{equation*} \norm{\sum_j M_{1_{F_j}} TP M_{1_{G_j}}}_{\Fonephi \rightarrow \Lonephi} < \infty \end{equation*} which by complex interpolation proves the proposition when $1 < p < 2$.  Finally when $2 < p < \infty$, one can similarly get a trivial $\Lonephi \rightarrow \Fonephi$ operator norm bound on \begin{equation*}  \left(\sum_j M_{1_{F_j}} TP M_{1_{G_j}}\right)^* = \sum_j P M_{1_{G_j}} T^* P M_{1_{F_j}} \end{equation*} since $T^*$ is bounded on $\Fonephi$.  Since $(\Fpphi)^* = \Fqphi$ when $1 < p < \infty$ where $q$ is the conjugate exponent of $p$ (see \cite{SV}), duality and complex interpolation now proves the proposition when $2 < p < \infty$.   \end{proof}

Because of (\ref{EquivOfNormRepKer}), the proof of the next result is basically the same as the proof of Theorem~\ref{essBerg} and therefore we skip it.

\begin{thm} \label{essFock} Let $1 < p < \infty$ and let $T$ be in the norm closure of $\AaFock$.  Then there exists $r, C > 0$ (both depending on $T$) such that \begin{equation*} \|T\|_{\text{e}} \leq C \limsup_{|z| \rightarrow\infty} \sup_{w \in D(z, r)} \abs{\ip{Tk_z}{ k_w }_{\Ftwophi}} \end{equation*} where $\|T\|_{\text{e}}$ is the essential norm of $T$ as a bounded operator on $\Fpphi$.\end{thm}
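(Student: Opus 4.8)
The plan is to follow the proof of Theorem~\ref{essBerg} essentially verbatim, replacing the hyperbolic discs by Euclidean discs and exploiting the simplification that in $\Fpphi$ every normalized reproducing kernel satisfies $\norm{\widetilde{k_w}}_{\Fpphi}\approx 1$ for all $1\leq p\leq\infty$ (a consequence of the Gaussian-type decay $\abs{\ip{k_z}{k_w}_{\Ftwophi}}\lesssim e^{-\epsilon\abs{z-w}}$), so that no $p$-dependent renormalization of the kernels is needed; all weighted factors are controlled by $|K(z,z)|\approx e^{2\phi(z)}$ together with \eqref{EquivOfNormRepKer}. First, since $P\colon\Lpphi\to\Fpphi$ is a bounded projection and $T=PTP$ on $\Fpphi$, it suffices to bound the essential norm of $TP$ regarded as an operator from $\Fpphi$ into $\Lpphi$. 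Assuming $\norm{TP}_{\text{e}}>0$, Proposition~\ref{MainEst2} supplies $r>0$ and the associated covering $\FF_r=\{F_j\}$, with $G_j=\{z:d(z,F_j)\leq r\}$ from Proposition~\ref{Covering}, so that $\norm{TP-\sum_j M_{1_{F_j}}TPM_{1_{G_j}}}_{\Fpphi\to\Lpphi}<\tfrac12\norm{TP}_{\text{e}}$. Since each $F_j$ has diameter at most $2r$, multiplication by $1_{F_j}$ is compact from $\Fpphi$ into $\Lpphi$ (a norm-bounded sequence in $\Fpphi$ is, by the pointwise estimate and Montel's theorem, precompact in $\Lpphi$ after restriction to $F_j$), so every finite partial sum $\sum_{j<m}M_{1_{F_j}}TPM_{1_{G_j}}$ is compact. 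Hence, writing $T_m=\sum_{j\geq m}M_{1_{F_j}}TPM_{1_{G_j}}$, one has $\norm{TP}_{\text{e}}\leq\tfrac12\norm{TP}_{\text{e}}+\norm{T_m}_{\Fpphi\to\Lpphi}$ for all $m$, and it remains to control $\limsup_{m\to\infty}\norm{T_m}$.

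For $f\in\Fpphi$ with $\norm{f}_{\Fpphi}\leq 1$, property~(1) of Proposition~\ref{Covering} gives $\sum_j\norm{M_{1_{G_j}}f}_{\Lpphi}^p\leq N$, whence $\norm{T_m f}_{\Lpphi}^p=\sum_{j\geq m}\norm{M_{1_{F_j}}TPM_{1_{G_j}}f}_{\Lpphi}^p\leq N\sup_{j\geq m}\norm{M_{1_{F_j}}T\ell_j}_{\Lpphi}^p$, where $\ell_j=PM_{1_{G_j}}f/\norm{M_{1_{G_j}}f}_{\Lpphi}$. Passing to $\limsup$ in $m$ and choosing a near-optimal sequence $\{f_j\}\subset\Fpphi$ with $\norm{f_j}_{\Fpphi}\leq1$, the problem reduces to estimating $\limsup_{j\to\infty}\norm{M_{1_{F_j}}Tg_j}_{\Lpphi}$, where by \eqref{FockResOfId} one has $g_j:=PM_{1_{G_j}}f_j/\norm{M_{1_{G_j}}f_j}_{\Lpphi}=\int_{G_j}\widetilde{a}_j(w)\,\widetilde{k_w}\,dv(w)$ with $\widetilde{a}_j(w)=\ip{f_j}{\widetilde{k_w}}_{\Ftwophi}\left(\int_{G_j}\abs{\ip{f_j}{\widetilde{k_u}}_{\Ftwophi}}^p\,dv(u)\right)^{-1/p}$, normalized so that $\int_{G_j}\abs{\widetilde{a}_j}^p\,dv=1$.

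The heart of the argument is the pointwise-to-global estimate. Using \eqref{EquivOfNormRepKer} and $\norm{K_z}_{\Ftwophi}e^{-\phi(z)}\approx1$ to pass between the $\widetilde{k}$'s and the $k$'s, one has $\abs{Tg_j(z)}e^{-\phi(z)}\lesssim\int_{G_j}\abs{\widetilde{a}_j(w)}\,\abs{\ip{Tk_w}{k_z}_{\Ftwophi}}\,dv(w)$; then H\"older's inequality in the variable $w$, the normalization $\int_{G_j}\abs{\widetilde{a}_j}^p\,dv=1$, and the volume bounds $v(F_j),v(G_j)\leq C(r)$ give
\begin{equation*}
\norm{M_{1_{F_j}}Tg_j}_{\Lpphi}^p\lesssim C(r)\sup_{z\in F_j}\sup_{w\in G_j}\abs{\ip{Tk_w}{k_z}_{\Ftwophi}}^p .
\end{equation*}
By property~(2) of Proposition~\ref{Covering}, $z\in F_j$ and $w\in G_j$ force $\abs{z-w}\leq3r$, and since only finitely many $F_j$ can meet any fixed ball (otherwise some point would lie in infinitely many $G_j$, contradicting the overlap bound $N$), the sets $F_j$, hence the $G_j$, recede to infinity as $j\to\infty$. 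Relabelling the two kernel arguments then yields $\limsup_{j\to\infty}\norm{M_{1_{F_j}}Tg_j}_{\Lpphi}^p\leq C(r)\limsup_{\abs{z}\to\infty}\sup_{w\in D(z,3r)}\abs{\ip{Tk_z}{k_w}_{\Ftwophi}}^p$.

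Combining these bounds and tracking the constants --- choosing the slack in the near-optimal sequence $\{f_j\}$ to be $\tfrac14N^{-1/p}\norm{TP}_{\text{e}}$, so that $\limsup_m\norm{T_m}\leq\tfrac14\norm{TP}_{\text{e}}+C\limsup_{\abs{z}\to\infty}\sup_{w\in D(z,3r)}\abs{\ip{Tk_z}{k_w}_{\Ftwophi}}$ --- the inequality $\norm{TP}_{\text{e}}\leq\tfrac12\norm{TP}_{\text{e}}+\limsup_m\norm{T_m}$ gives $\tfrac14\norm{TP}_{\text{e}}\leq C\limsup_{\abs{z}\to\infty}\sup_{w\in D(z,3r)}\abs{\ip{Tk_z}{k_w}_{\Ftwophi}}$, and renaming the radius $3r$ as $r$ completes the proof. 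The one delicate point, exactly as in Theorem~\ref{essBerg}, is to organize the H\"older step and the relabelling so that the variable tending to infinity in the final quantity sits in the argument of $T$ while $w$ ranges over a Euclidean disc of fixed radius about it; the weighted-Fock bookkeeping is otherwise routine, being entirely absorbed by \eqref{EquivOfNormRepKer} and the uniform comparability $\norm{\widetilde{k_w}}_{\Fpphi}\approx1$.
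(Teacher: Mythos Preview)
Your proposal is correct and follows exactly the route the paper indicates: the paper omits the proof entirely, stating only that because of \eqref{EquivOfNormRepKer} it is ``basically the same as the proof of Theorem~\ref{essBerg},'' and you have carried out precisely those adaptations (Euclidean discs in place of hyperbolic ones, $\widetilde{k_w}$ in place of the $p$-normalized kernels, $dv$ in place of $d\lambda$). The argument---the reduction via Proposition~\ref{MainEst2}, the compactness of finite partial sums, the H\"older step over $G_j$, and the final relabelling using $\operatorname{diam} F_j\leq 2r$---matches the Bergman-space proof line by line.
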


As was stated in the beginning of this section, the operator $U_z$ for $z \in \C^n$ is an isometry on $\Fp$.  Furthermore, since a direct calculation shows that \begin{equation*}\abs{ U_z k_w (u)} \approx \abs{k_{z - w} (u)},\end{equation*} the  proof of Theorem~\ref{local-ordinaryFock} now follows immediately by combining Theorem \ref{essFock} with \cite{I}*{Proposition 1.4}.

\section{Concluding remarks} \label{ConcRemSec}

The reader should clearly notice that the proof of Theorem \ref{essBerg} did \textit{not} in any way use the existence of a family of ``translation" operators $\{U_z ^{(p)} \}_{z \in \B_n}$ on $A^p$ that satisfies \begin{equation} \label{TransOpForm2} \abs{(U_z ^{(p)})^* k_w ^{(p')}}  \approx \abs{k_{\phi_z (w)} ^{(p')}} \end{equation}  (and moreover, one can make a similar remark regarding Theorem \ref{essFock}).  In particular, a trivial application of H\"{o}lder's inequality in conjunction with the above remark implies that one can prove the so called ``reproducing kernel thesis" for operators in the norm closure of $\AapBerg$ (respectively, $\AaFock$) \textit{without} the use of any ``translation" operators. It would therefore be interesting to know if our results can be proved for the weighted Bergman spaces on the ball that were considered in \cite{BO} for example.  Moreover, it would be interesting to know whether one can use the ideas in this paper to modify the results in \cite{MW} to include spaces where condition A.5 on the space of holomorphic functions at hand is not necessarily true (note that it is precisely this condition that allows one to easily cook up ``translation operators").

It would also be very interesting to know whether ``translation" operators are in fact crucial for proving Proposition \ref{BerVanProp} and its generalized Bargmann-Fock space analog (again see \cite{I}*{Proposition 1.4}).  More generally, it would be fascinating to know precisely how these translation operators fit into the ``Berezin transform implies compactness" philosophy since at present the answer to this seems rather mysterious.

As was noted earlier, the techniques in \cite{XZ} are essentially frame theoretic, and therefore are rather different than the techniques used in this paper.  In particular, a crucial aspect of \cite{XZ} involves a localization result somewhat similar in spirit to Proposition \ref{MainEst2} and which essentially involves treating a ``sufficiently localized" operator $T$ as a sort of matrix with respect to the frame $\{k_\sigma\}_{\sigma \in \Z^{2n}}$ for $\FF^2$. Also, note that the techniques in \cite{XZ} were extended in \cite{I} to the generalized Bargmann-Fock space setting to obtain results for $\Ftwophi$ that are similar to (but slightly weaker than) the results obtained in this paper.  Because of these considerable differences in localization schemes, it would be interesting to know if one can combine the localization ideas from this paper with that of \cite{I, XZ} to obtain new or sharper results on $\Ftwophi$ (or even just new or sharper results on $\Ft$).

\begin{bibdiv} \begin{biblist}

\bib{BI}{article}{
   author={Bauer, W.},
   author={Isralowitz, J.},
   title={Compactness characterization of operators in the Toeplitz algebra
   of the Fock space $F^p_\alpha$},
   journal={J. Funct. Anal.},
   volume={263},
   date={2012},
   number={5},
   pages={1323--1355}
}

\bib{BC}{article}{
   author={ Berger, C.},
   author={Coburn, L.},
   title={Heat flow and Berezin-Toeplitz estimates},
   journal={Amer. J. Math.},
   volume={116},
   date={1994},
   number={3},
   pages={563--590}}

\bib{BO}{article}{
   author={Berndtsson, B.},
   author={Ortega-Cerd\`{a}, J.},
   title={On interpolation and sampling in Hilbert spaces of analytic functions.},
   journal={J. Reine Angew. Math.},
   volume={464},
   date={1995},
   number={5},
   pages={109–-128}
}

\bib{CZ}{article}{
   author={Cho, H. R.},
   author={Zhu, K.},
   title={Fock-Sobolev spaces and their Carleson measures},
   journal={J. Funct. Anal.},
   volume={263},
   date={2012},
   number={8},
   pages={2483–-2506}
}

\bib{I}{article}{
author={Isralowitz, J.}
title={Compactness and essential norm properties of operators on generalized Fock spaces},
eprint={http://arxiv.org/abs/1305.7475},
status={to appear in J. Operator Theory},
date={2013}
pages={1--28}
}

\bib{MW}{article}{
   author={Mitkovski, M.},
   author={Wick, B. D.},
   title={A Reproducing Kernel Thesis for Operators on Bergman-type Function Spaces},
   journal={J. Funct. Anal.},
   volume={267},
   date={2014},
   pages={2028--2055}
}

\bib{MSW}{article}{
   author={Mitkovski, M.},
   author={Su{\'a}rez, D.},
   author={Wick, B. D.},
   title={The Essential Norm of Operators on $A^p_\alpha(\mathbb{B}_n)$},
   journal={Integral Equations Operator Theory},
   volume={75},
   date={2013},
   number={2},
   pages={197--233}
}

\bib{SV}{article}{
author={Schuster, A.}
author={Varolin, D.}
title={Toeplitz operators and Carleson measures on generalized Bargmann-Fock spaces,}
journal={Integral Equations Operator Theory}
volume={72}
date={2012}
number={3}
pages={363--392}
}

\bib{Sua}{article}{
   author={Su{\'a}rez, D.},
   title={The essential norm of operators in the Toeplitz algebra on $A^p(\mathbb{B}_n)$},
   journal={Indiana Univ. Math. J.},
   volume={56},
   date={2007},
   number={5},
   pages={2185--2232}
}

\bib{XZ}{article}{
   author={Xia, J.},
   author={Zheng, D.},
   title={Localization and Berezin transform on the Fock space},
   journal={J. Funct. Anal.},
   volume={264},
   date={2013},
   number={1},
   pages={97--117}
}

\bib{Zhu}{book}{
   author={Zhu, K.},
   title={Spaces of holomorphic functions in the unit ball},
   series={Graduate Texts in Mathematics},
   volume={226},
   publisher={Springer-Verlag},
   place={New York},
   date={2005},
   pages={x+271}
}

\bib{Zhu2}{book}{
    author={Zhu, K.}
    title={Analysis on Fock spaces},
    series={Graduate Texts in Mathematics},
    volume={263},
    publisher={Springer-Verlag},
    place={New York},
    date={2012},
    pages={x+344}
}

\end{biblist} \end{bibdiv}

\end{document}